\theoremstyle{plain}
\newtheorem{thm}{Theorem}
\newtheorem{lem}[thm]{Lemma}
\newtheorem{conj}{Conjecture}
\theoremstyle{definition}
\newtheorem{rem}[thm]{Remark}
\def\final{0}  
\def\iflong{\iffalse}
\newcommand{\knote}[1]{{\color{red}[{\tiny \textbf{Kristóf:} \bf #1}]\marginpar{\color{red}*}}}
\newcommand{\tnote}[1]{{\color{blue}[{\tiny \textbf{Tamás:} \bf #1}]\marginpar{\color{blue}*}}}
\newcommand{\bnote}[1]{{\color{green}[{\tiny \textbf{Bence:} \bf #1}]\marginpar{\color{green}*}}}
\newcommand{\knote}[1]{}
\newcommand{\tnote}[1]{}
\newcommand{\bnote}[1]{}
\DeclareMathOperator\ineqa{A}
\DeclareMathOperator\ineqb{B}
\newcommand{\bP}{\mathbf{P}}
\newcommand{\cB}{\mathcal{B}}
\newcommand{\cI}{\mathcal{I}}
\newcommand{\cH}{\mathcal{H}}
\newcommand{\cC}{\mathcal{C}}
\title{Weighted exchange distance of basis pairs}
\date{}
\author{
Kristóf Bérczi\thanks{MTA-ELTE Momentum Matroid Optimization Research Group and MTA-ELTE Egerváry Research Group, Department of Operations Research, Eötvös Loránd University, Budapest, Hungary. Email: \texttt{kristof.berczi@ttk.elte.hu, matben@student.elte.hu, tamas.schwarcz@ttk.elte.hu}.}
\and
Bence Mátravölgyi\footnotemark[1]
\and
Tamás Schwarcz\footnotemark[1]
}
\begin{document}
\maketitle

\begin{abstract}
Two pairs of disjoint bases $\bP_1=(R_1,B_1)$ and $\bP_2=(R_2,B_2)$ of a matroid $M$ are called \emph{equivalent} if $\bP_1$ can be transformed into $\bP_2$ by a series of symmetric exchanges. In 1980, White conjectured that such a sequence always exists whenever $R_1\cup B_1=R_2\cup B_2$. A strengthening of the conjecture was proposed by Hamidoune, stating that minimum length of an exchange is at most the rank of the matroid.

We propose a weighted variant of Hamidoune's conjecture, where the weight of an exchange depends on the weights of the exchanged elements. We prove the conjecture for several matroid classes: strongly base orderable matroids, split matroids, graphic matroids of wheels, and spikes.
\medskip

\noindent \textbf{Keywords:} Graphic matroid, Sequential symmetric basis exchange, Spike, Split matroid, Strongly base orderable matroid, Wheel graph
\end{abstract}

\section{Introduction}
\label{sec:intro}

Given a matroid $M$ over a ground set $S$, the exchange axiom implies that for any pair of bases $R$ and $B$ there exists a sequence of exchanges that transforms $R$ into $B$, and the shortest length of such a sequence is $|R-B|$. In the light of this, it is natural to ask whether analogous results hold for basis pairs instead of single basis. More precisely, let $(R,B)$ be an ordered pair of disjoint bases of $M$, and let $e\in R\setminus B$ and $f\in B\setminus R$ be such that both $R'\coloneqq R-e+f$ and $B'\coloneqq B+e-f$ are bases. In such a case, we call the exchange \textbf{feasible} and say that the pair $(R',B')$ is obtained from $(R,B)$ by a \textbf{symmetric exchange}. Using this terminology, we define the \textbf{exchange distance} (or \textbf{distance} for short) of two pairs of disjoint bases $\bP_1=(R_1,B_1)$ and $\bP_2=(R_2,B_2)$ to be the minimum number of symmetric exchanges needed to transform the former into the latter if such a sequence exists and $+\infty$ otherwise. We call two pairs of disjoint bases \textbf{equivalent} if their exchange distance is finite. A sequence of symmetric exchanges starting from a pair $\bP_1$ is called \textbf{strictly monotone with respect to another pair $\bP_2$} (or \textbf{strictly monotone} for short when $\bP_2$ is clear from the context) if each step decreases the difference between the first member of the current pair and that of $\bP_2$. In other words, a strictly monotone exchange sequence uses elements only from $(R_1\cap B_2)\cup(R_2\cap B_1)$ and at most once.

At this point it is not clear (I) when the distance of two pairs will be finite, and (II) if their distance is  finite, whether we can give an upper bound on it. Regarding question (I), one can formulate an obvious necessary condition for the distance of $\bP_1$ and $\bP_2$ to be finite, namely $R_1\cup B_1=R_2\cup B_2$ should certainly hold -- two pairs with this property are called \textbf{compatible}. In \cite{white1980unique}, White conjectured that \emph{two basis pairs $\bP_1$ and $\bP_2$ are equivalent if and only if they are compatible}. While the conjecture was verified for various matroid classes, it remains open in general. 

Much less is known about question (II), that is, the optimization version of the problem. Gabow~\cite{gabow1976decomposing} studied sequential symmetric exchanges and posed the following problem, which was later stated as a conjecture by Wiedemann~\cite{wiedemann1984cyclic} and by Cordovil and Moreira~\cite{cordovil1993bases}: \emph{for any two disjoint bases $R$ and $B$ of a matroid $M$, there is a sequence of $r$ symmetric exchanges that transforms the pair $\bP_1=(R,B)$ into $\bP_2=(B,R)$}. The rank of the matroid is a trivial lower bound on the minimum number of exchanges needed to transform a pair $(R,B)$ into $(B,R)$, and the essence of Gabow's conjecture is that that many steps might always suffice. The relation between the conjectures of White and Gabow is immediate: the latter would imply the former for sequences of the form $(R,B)$ and $(B,R)$.

In general, if $M$ has rank $r$, then $r-|R_1\cap R_2|$ is an obvious lower bound on the exchange distance of $\bP_1=(R_1,B_1)$ and $\bP_2 = (R_2,B_2)$. However, it might happen that more symmetric exchanges are needed even if $M$ is a graphic matroid; see \cite{farber1989basis} or Figure~\ref{fig:k4} for a counterexample. As a common generalization of the conjectures of White and Gabow, Hamidoune~\cite{cordovil1993bases} proposed a rather optimistic variant stating that \emph{the exchange distance of compatible basis pairs is at most the rank of the matroid}. 

Let $w\colon S\to\mathbb{R}_+$ be a weight function on the elements of the ground set $S$. Given a pair $(R,B)$ of disjoint bases, we define the \textbf{weight of a symmetric exchange}  $R'\coloneqq R-e+f$ and $B'\coloneqq B+e-f$ to be $w(e)+w(f)$, that is, the sum of the weights of the exchanged elements. Analogously to the unweighted setting, we define the \textbf{weighted exchange distance} (or \textbf{weighted distance} for short) of two pairs of disjoint bases $\bP_1=(R_1,B_1)$ and $\bP_2=(R_2,B_2)$ to be the minimum total weight of symmetric exchanges needed to transform the former into the latter if such a sequence exists and $+\infty$ otherwise. As a weighted extension of Hamidoune's conjecture, we propose the following.

\begin{conj}\label{conj:weighted}
Let $\bP_1=(R_1,B_1)$ and $\bP_2=(R_2,B_2)$ be compatible basis pairs of a matroid $M$ over a ground set $S$, and let $w\colon S\to\mathbb{R}_+$ be a weight function. Then the weighted exchange distance of $\bP_1$ and $\bP_2$ is at most $w(R_1\cup B_1)=w(R_2\cup B_2)$.
\end{conj}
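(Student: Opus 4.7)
The plan is to reduce Conjecture~\ref{conj:weighted} to a weight-free structural assertion. The key observation is that if a symmetric exchange sequence from $\bP_1$ to $\bP_2$ is \emph{strictly monotone}, then each exchange uses two distinct elements of $(R_1\cap B_2)\cup(R_2\cap B_1)$ and no element is used twice, so the total weight of the sequence is at most $w((R_1\cap B_2)\cup(R_2\cap B_1))\le w(R_1\cup B_1)$, independently of the choice of weights. Hence it suffices to prove the weight-free statement that every compatible pair admits a strictly monotone exchange sequence to its partner — an unweighted strengthening of Hamidoune's conjecture. With this reduction in place, the weights disappear from the problem entirely.

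For strongly base orderable matroids I would build such a sequence from the two bijections $\pi_R\colon R_1\setminus R_2\to R_2\setminus R_1$ and $\pi_B\colon B_1\setminus B_2\to B_2\setminus B_1$ guaranteed by the class. Compatibility together with $R_i\cap B_i=\emptyset$ yields $R_1\setminus R_2=B_2\setminus B_1$ and $R_2\setminus R_1=B_1\setminus B_2$, so $\pi_R$ and $\pi_B$ may be viewed as permutations of a common set whose composition decomposes into orbits; performing exchanges along a shortest orbit preserves the bijection structure on the remainder and produces a strictly monotone sequence. For split matroids I would use their description via split hyperplanes: the ground set partitions into blocks on which basis counts are rigid, and the existence of a strictly monotone first exchange reduces to a local check essentially of uniform-matroid type. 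For wheels and spikes I would induct on $|R_1\setminus R_2|$, case-splitting on how the symmetric differences intersect the rim/spoke (respectively tip/leg) skeleton in order to locate a feasible first exchange whose removal keeps the pair within the class.

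The principal obstacle in every class is the \emph{dead-end problem}: a greedy first exchange may leave a compatible sub-instance in which no further feasible strictly monotone exchange remains, so naive induction breaks. Each class-specific proof must therefore be organised around a selection rule, tailored to the extra structure, that guarantees progress. I do not expect such a rule to be uniform — producing one (or refuting one) in full generality appears to be essentially the content of White's conjecture, which explains why the present paper treats the four listed classes separately rather than seeking a single argument.
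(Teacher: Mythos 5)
Your proposed reduction to the weight-free statement ``every compatible pair admits a strictly monotone exchange sequence'' is false, and the paper is largely an exhibit of why: Figure~\ref{fig:large} gives colorings of a wheel where $\bP_1$ and $\bP_2$ have opposite orientations and the exchange distance is $\Omega(n)$ even though $(n-1)-|R_1\cap R_2|=2$, so no strictly monotone sequence exists; the remark at the end of Section~\ref{sec:spikes} constructs $2^{r-2}$ colorings of a spike with \emph{no} strictly monotone sequence between any two of them; Figure~\ref{fig:k4} is a six-element graphic matroid where every transforming sequence must reuse an element; and in the split-matroid proof the case ``there exists no strictly monotone exchange sequence between $\bP_1$ and $\bP_2$'' is treated at length, using results from \cite{berczi2022exchange}. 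Since such sequences provably fail to exist, the weights do not ``disappear from the problem'' — the whole difficulty, which you acknowledge under the name ``dead-end problem'' but do not actually address, is to choose which elements get used twice so that the excess weight is controlled. The paper's results accordingly bound total weight by $w(S)$ while allowing each element to appear up to twice, never by the smaller quantity $w((R_1\cap B_2)\cup(R_2\cap B_1))$.

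The strongly base orderable sketch has a second, independent problem. Strong base orderability does not give you a bijection from $R_1\setminus R_2$ to $R_2\setminus R_1$ with the desired exchange property; it gives a bijection $\phi\colon B_1\to B_2$ between two full bases such that $(B_1\setminus X)\cup\phi(X)$ is a basis for all $X$. The paper (Theorem~\ref{thm:sbo}) uses the bijections $\phi_1\colon R_1\to B_1$ and $\phi_2\colon R_2\to B_2$ \emph{between the members of the same pair}, assembles a bipartite graph from their graphs, extracts two auxiliary bases $S$ and $T$ from its color classes, and routes $\bP_1\to(S,T)\to\bP_2$ and $\bP_1\to(T,S)\to\bP_2$; an averaging argument shows at least one of the two routes has total weight $\le w(R_1\cup B_1)$, and each route already uses elements twice. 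So even in the ``easy'' class your outline does not go through: the sequence is genuinely non-monotone and the bound is won by a pigeonhole over two candidate sequences, not by monotonicity. You should discard the monotonicity reduction and instead organize each class-specific proof around (i) identifying the obstruction to monotonicity (orientation reversal for wheels, the hyperedge/tightness structure for split matroids, the tip/leg parity for spikes), and (ii) selecting, among a small family of candidate detours, the one whose doubly-used elements are cheapest.
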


By setting the weight function to be identically one, we get back Hamidoune's conjecture. It is worth mentioning that a strictly monotone exchange sequence transforming $\bP_1$ into $\bP_2$ is optimal in every sense, i.e., it has both minimum length and minimum weight. 

\paragraph{Previous work.}

By relying on the constructive characterization of bispanning graphs, Farber, Richter, and Shank~\cite{farber1985edge} proved White's conjecture for graphic and cographic matroids, while Farber~\cite{farber1989basis} settled the statement for transversal matroids. Bonin~\cite{bonin2013basis} verified the conjecture for sparse paving matroids. The case of strongly base orderable matroids was solved by Laso{\'n} and Micha{\l}ek~\cite{lason2014toric}. McGuinness~\cite{mcguinness2020frame} extended the graphic case to frame matroids satisfying a certain linearity condition. Kotlar and Ziv~\cite{kotlar2013serial} showed that any two elements of a basis have a sequential symmetric exchange with some two elements of any other basis. At the same time, Kotlar~\cite{kotlar2013circuits} proved that three consecutive symmetric exchanges exist for any two bases of a matroid, and that a full sequential symmetric exchange, of length at most $6$, exists for any two bases of a matroid of rank $5$.

Gabow's conjecture was verified for partition matroids, matching and transversal matroids, and matroids of rank less than $4$ in~\cite{gabow1976decomposing}, and an easy reasoning shows that it also holds for strongly base orderable matroids as well. The graphic case was settled by Wiedemann~\cite{wiedemann1984cyclic}, Kajitani, Ueno, and Miyano~\cite{kajitani1988ordering}, and Cordovil and Moreira~\cite{cordovil1993bases}.  

Recently, Bérczi and Schwarcz~\cite{berczi2022exchange} showed that Hamidoune's conjecture hold for split matroids, a large class that contains paving matroids as well. While studying a specific maker-breaker game on bispanning graphs, Andres, Hochst\"{a}ttler and Merkel \cite{andres2014base} showed that there is an exchange sequence between any two pairs of disjoint spanning trees of a wheel of rank at least four using only so-called left unique exchanges. They also asked whether the exchange distance of compatible basis pairs of a matroid can be bounded by a polynomial of the rank -- this latter problem is a weakening of Hamidoune's conjecture. 

The rank of the graphic matroid of a connected graph on $n$ vertices is $n-1$. Though it is not stated explicitly, the algorithms of \cite{farber1985edge} and \cite{blasiak2008toric} that prove White's conjecture for graphic matroids give a sequence of exchanges of length at most $O(n^2)$. It remains an intriguing open problem to improve the bound to $O(n)$, matching the order of the bound in the conjecture.

\paragraph{Our results.}

We verify Conjecture~\ref{conj:weighted} for various matroid classes. First we consider strongly base orderable matroids, a class with distinctive structural properties.

For the remaining matroid classes, we work with a further strengthening of the conjecture where both the length and the weight of the exchange sequence are ought to be bounded. We verify this stronger variant for split matroids, a class that was introduced only recently and generalizes paving matroids.  

Our main result is a proof of the conjecture for graphic matroids of wheels. Though wheels are structurally rather simple, the proof for this graph class is already non-trivial and requires a thorough understanding of feasible exchanges. As a byproduct, we show that the minimum number of steps required to transform $\bP_1$ into $\bP_2$ can be arbitrarily large compared to the lower bound $r-|R_1\cap R_2|$. 

Finally, we prove the conjecture for spikes, an important class of 3-connected matroids. Spikes are interesting because, as we show, one can define an arbitrarily large number of basis pairs without a strictly monotone exchange sequence between any two of them. This is in sharp contrast to the case of wheels, where for any three pairs of bases, there exist two with a strictly monotone exchange sequence between them.

\medskip

The rest of the paper is organized as follows. Basic notions and definitions are given in Section~\ref{sec:prelim}, together with some elementary observations on wheels. We verify Conjecture~\ref{conj:weighted} for strongly base orderable matroids and for split matroids in Section~\ref{sec:easy}. Graphic matroids of wheels are considered in Section~\ref{sec:wheels}, while spikes are discussed in Section~\ref{sec:spikes}. We close the paper with further observations in Section~\ref{sec:further}.

\section{Preliminaries}
\label{sec:prelim}

\paragraph{General notation.}

The set of nonnegative real numbers is denoted by $\mathbb{R}_+$. For subsets $X,Y\subseteq S$, their \textbf{symmetric difference} is defined as $X\triangle Y\coloneqq (X\setminus Y)\cup (Y\setminus X)$. When $Y$ consist of a single element $y$, then $X\setminus \{y\}$ and $X\cup\{y\}$ are abbreviated as $X-y$ and $X+y$, respectively. Given a weight function $w\colon S\to\mathbb{R}_+$ and a subset $X\subseteq S$, we use the notation $w(X)=\sum_{s\in X}w(s)$. 

\paragraph{Matroids.}

For basic definitions on matroids, we refer the reader to~\cite{oxley2011matroid}. If $M$ is a rank-$r$ matroid over a ground set $S$ of size $2r$ such that $S$ decomposes into two disjoint bases $R$ and $B$ of $M$, then such a decomposition is called a \textbf{coloring} of $M$\footnote{Throughout the paper, we will refer to the elements of $R$ and $B$ as `red' and `blue', respectively.}. A \textbf{feasible exchange} of elements $r\in R$ and $b\in B$ is denoted by $(r,b)$. We extend this notation to a \textbf{sequence of symmetric exchanges} as well by writing $(r_1,b_1),\dots,(r_k,b_k)$, meaning that $(R\setminus \{r_1,\dots,r_i\})\cup \{b_1,\dots,b_i\}$ and $(B\cup \{r_1,\dots,r_i\})\setminus \{b_1,\dots,b_i\}$ are bases for $i=1,\dots,k$. A matroid $M$ is \textbf{strongly base orderable} if  or any two bases $B_1, B_2$, there exists a bijection $\phi\colon B_1 \to B_2$ such that  $(B_1 \setminus X) \cup \phi(X)$ is also a basis for any $X \subseteq B_1$,  where we denote $\phi(X) \coloneqq \{ \phi(e) \mid e \in X\}$.

Let $S$ be a ground set of size at least $r$, $\cH=\{H_1,\dots, H_q\}$ be a (possibly empty) collection of subsets of $S$, and $r, r_1, \dots, r_q$ be nonnegative integers satisfying $|H_i \cap H_j| \le r_i + r_j -r$ for $1 \le i < j \le q$, and $|S\setminus H_i| + r_i \ge r$ for $i=1,\dots, q$. Then $\cI=\{\, X\subseteq S\mid |X|\leq r,\ |X\cap H_i|\leq r_i\ \text{for $1\leq i \leq q$} \,\}$ forms the family of independent sets of a rank-$r$ matroid $M$ that we call an \textbf{elementary split matroid}; see \cite{berczi2022hypergraph} for details. A set $F \subseteq S$ is called \textbf{$H_i$-tight} if $|F \cap H_i| = r_i$. A \textbf{split matroid} is the direct sum of a single elementary split matroid and some (maybe zero) uniform matroids.

For a graph $G=(V,E)$ on $n$ vertices, the \textbf{graphic matroid} $M=(E,\cI)$ of $G$ is defined on the edge set by considering a subset $F\subseteq E$ to be independent if it is a forest, that is, $\cI=\{F\subseteq E \mid F\ \text{does not contain a cycle}\}$. If the graph is connected, then the bases of the graphic matroid are exactly the spanning trees of $G$ and the rank of the matroid is $n-1$. 

Let $S=\{t,x_1,y_1,\dots,x_r,y_r\}$ be a ground set of size $2r+1$ for some $r\geq 3$, and let $\cC_1=\{\{t,x_i,y_i\} \mid 1\leq i\leq r\}$ and $\cC_2=\{\{x_i,y_i,x_j,y_j\} \mid 1\leq i<j\leq r\}$. Furthermore, let $\cC_3\subseteq \{Z\subseteq S \mid |Z|=r,\ |Z\cap\{x_i,y_i\}|=1\ \text{for $1\leq i\leq r$}\}$; note that $\cC_3$ might be empty. Finally, define $\cC_4=\{C\subseteq S \mid |C|=r+1,\ C'\not\subseteq C\ \text{for $C'\in\cC_1\cup\cC_2\cup\cC_3$}\}$. Then the family $\cC=\cC_1\cup\cC_2\cup\cC_3\cup\cC_4$ satisfies the circuit axioms, hence $M=(S,\cC)$ is a rank-$r$ matroid with circuit family $\cC$. Matroids arising this way are called \textbf{spikes}, where $t$ and the pairs $\{x_i,y_i\}$ are called the \textbf{tip} and the \textbf{legs} of the spike, respectively. It is not difficult to check that by restricting $M$ to any $2r$ of its elements (or in other words, deleting any of its elements) results in a matroid whose ground set decomposes into two disjoint bases.

\paragraph{Wheels.}

A graph $G=(V,E)$ is called a \textbf{wheel graph} (or \textbf{wheel} for short) if it is obtained by connecting a vertex, called the \textbf{center of the wheel}, to all the vertices of a cycle of length at least three, called the \textbf{outer cycle} of the wheel. In particular, wheels have at least four vertices. Edges connecting the center vertex with the vertices of the outer cycle are called \textbf{spokes}, while the edges of the outer cycle are called \textbf{rim edges}. Wheels are clearly planar, and so the order of the vertices on the outer cycle implies a natural cyclic ordering of the spokes and the rim edges as well.

It is not difficult to check that any wheel is the disjoint union of two spanning trees. Therefore, a coloring of the graphic matroid of a wheel is basically a partition of its edge set into two colors $R$ and $B$ such that both color classes form a spanning tree. A nice property of wheels is that we have a fairly good understanding of the structure of their colorings. Indeed, in order to decompose a wheel into two spanning trees, we first need to split the spokes into two nonempty sets. Then, if a rim edge goes between the endpoints of two spokes having the same color, then it automatically has to be colored with the other color to obtain a basis. Hence it only remains to decide the color of the rim edges going between the endpoints of spokes having different colors. However, once we fix the color of any of those edges, it determines the color of all the remaining ones. 

We call a maximal set of consecutive spokes of the same color an \textbf{interval}. By the \textbf{length} and \textbf{color of the interval} we mean the number and color of the spokes in it, respectively. Rim edges going between two intervals are called \textbf{boundary edges}. By the above, for $X\in\{R,B\}$, either every interval of color $X$ is followed by a boundary edge of color $X$ in a counterclockwise direction, or every interval of color $X$ is followed by a boundary edge of color $X$ in a clockwise direction. This property is referred to as the \textbf{orientation} of the coloring, and the orientation is called \textbf{positive} in the former and \textbf{negative} in the latter case; see Figure~\ref{fig:wheela} for an example. Orientations will play a crucial role in whether one can go from one coloring to another using a small number of exchanges or not.

\begin{figure}[t!]
\centering
\begin{subfigure}[t]{0.47\textwidth}
\centering
\includegraphics[width=.7\linewidth]{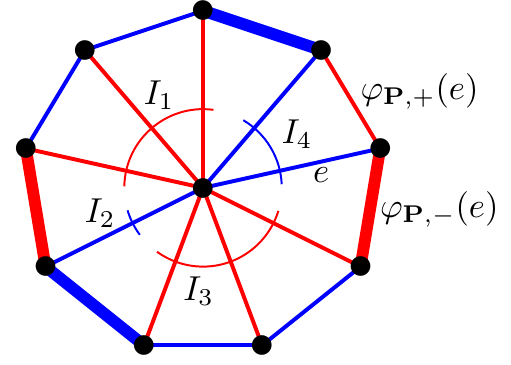}
\caption{The red and blue intervals are denoted by $I_1,I_3$ and $I_2,I_4$, respectively.}
\label{fig:wheela}
\end{subfigure}\hfill
\begin{subfigure}[t]{0.47\textwidth}
\centering
\includegraphics[width=.7\linewidth]{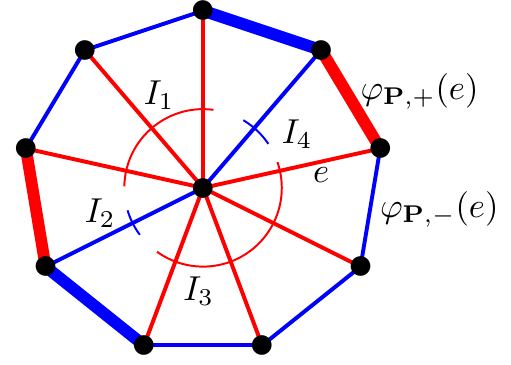}
\caption{The coloring obtained by symmetrically exchanging $e$ and its pair $\varphi_{\bP,-}(e)$.}
\label{fig:wheelb}
\end{subfigure}
\caption{Colorings containing four intervals. Thick rim edges correspond to boundary edges. Note that both colorings have positive orientation.}
\label{fig:wheel}
\end{figure}

Given a coloring $\bP=(R,B)$ of the wheel graph, each spoke $e$ is incident to two rim edges. The rim edge sharing a vertex with $e$ in the direction opposite of the orientation of the coloring can always be exchanged with $e$. We denote this rim edge by $\varphi_{\bP,-}(e)$, while the other rim edge incident to $e$ is denoted by $\varphi_{\bP,+}(e)$; see Figure~\ref{fig:wheelb} for an example. Both $\varphi_{\bP,-}$ and $\varphi_{\bP,+}$ provide bijections between spokes and rim edges. Note that these bijections are already determined by the orientation of $\bP$. If there are at least four intervals in the coloring, then the feasible exchanges are exactly the ones exchanging $e$ and $\varphi_{\bP,-}(e)$ for some spoke $e$. When there are only two intervals in the coloring, then there are other pairs that can be symmetrically exchanged.

\section{Strongly base orderable and split matroids}
\label{sec:easy}

As a warm-up, we first consider two basic cases: strongly base orderable and split matroids. For strongly base orderable matroids, the proof of Conjecture~\ref{conj:weighted} can be read out directly from the proof of White's conjecture in~\cite{lason2014toric} for strongly base orderable matroids. However, it might help the reader to get familiar with the notion of basis exchanges, and also sheds light to the difficulties caused by the presence of a weight function. For split matroids, the proof is more involved and relies heavily on that of Hamidoune's conjecture appeared in~\cite{berczi2022exchange}.  

\subsection{Strongly base orderable matroids}

\begin{thm}\label{thm:sbo}
Let $\bP_1 = (R_1, B_1)$ and $\bP_2 = (R_2, B_2)$ be compatible pairs of disjoint bases of a rank-$r$ strongly base orderable matroid $M$ over a ground set $S$, and let $w\colon S\to\mathbb{R}_+$ be a weight function. Then there exists a sequence of exchanges of total weight at most $w(R_1\cup B_1) = w(R_2\cup B_2)$ that transforms $\bP_1$ into $\bP_2$ and uses each element at most twice.
\end{thm}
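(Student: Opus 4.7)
The plan is to prove the theorem by induction on $|R_1 \setminus R_2|$, following the structure of Laso\'n and Micha\l ek's proof of White's conjecture for strongly base orderable matroids, while carefully tracking the weight of the exchanges constructed along the way.

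Set $A \coloneqq R_1 \cap B_2 = R_1 \setminus R_2$ and $C \coloneqq R_2 \cap B_1 = R_2 \setminus R_1$; these are the elements that must change color, and $|A| = |C|$. Compatibility gives the disjoint decomposition $R_1 \cup B_1 = (R_1 \cap R_2) \sqcup A \sqcup C \sqcup (B_1 \cap B_2)$, so already $w(A) + w(C) \le w(R_1 \cup B_1)$; this is the weight budget we aim to respect. If $A = \emptyset$ then $B_1 = B_2$ as well and the empty sequence works, which handles the base case.

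For the inductive step the plan is to invoke strong base orderability twice. Applied to the bases $R_1$ and $R_2$ it yields a bijection $\phi \colon R_1 \to R_2$ which can be chosen to be the identity on $R_1 \cap R_2$, so that $\phi$ restricts to a bijection $A \to C$ with the multi-exchange property $(R_1 \setminus X) \cup \phi(X) \in \cB(M)$ for every $X \subseteq A$; applied to $B_1$ and $B_2$ it similarly yields $\psi \colon B_1 \to B_2$, identity on $B_1 \cap B_2$, restricting to $\psi \colon C \to A$ with $(B_1 \setminus Y) \cup \psi(Y) \in \cB(M)$ for every $Y \subseteq C$. I would then consider the cycle decomposition of the permutation $\psi \circ \phi$ on $A$. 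A fixed point $a$ with $\psi(\phi(a)) = a$ yields a single feasible symmetric exchange $(a, \phi(a))$: the $R$-side is a basis by $\phi$ with $X = \{a\}$ and the $B$-side by $\psi$ with $Y = \{\phi(a)\}$; we execute it, use each of the two elements once, contribute $w(a) + w(\phi(a))$ to the total, and recurse.

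For a cycle $(a_1, \ldots, a_k)$ of length $k \ge 2$ (with $c_i = \phi(a_i)$ and $\psi(c_i) = a_{i+1 \bmod k}$) a naive one-shot exchange $(a_i, c_i)$ need not be feasible on the $B$-side because $\phi$ and $\psi$ do not align along the cycle. Here I would follow the Laso\'n--Micha\l ek construction to realize the multi-exchange $R_1 \setminus \{a_1,\dots,a_k\} \cup \{c_1,\dots,c_k\}$ (and the corresponding change on the $B$-side, both guaranteed to be bases by SBO) through a sequence of single-element symmetric exchanges inside the set $\{a_1,\dots,a_k,c_1,\dots,c_k\}$, in which each element of the cycle is used at most twice. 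Since the cycles of $\psi \circ \phi$ partition $A$, these per-cycle sequences can be concatenated into a single exchange sequence in which each element of $A \cup C$ is used at most twice and elements outside $A \cup C$ are never touched, giving total weight at most $\sum_{i} \bigl(w(a_i) + w(c_i)\bigr) = w(A) + w(C) \le w(R_1 \cup B_1)$.

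The main obstacle is the decomposition of a $\pi$-cycle of length $k \ge 2$ into single-element symmetric exchanges that stay within $A \cup C$, use each cycle element at most twice, and remain feasible at every intermediate step; the argument proceeds by iteratively applying SBO to subsets of the cycle and to appropriately modified bases obtained from $R_1$ and $B_1$ after partial exchanges, exploiting the fact that the SBO multi-exchange property is inherited by such modifications. Once this cycle lemma is in place the weight accounting above is immediate, so all the difficulty is concentrated in producing the short, twice-using, feasible sequence for a single cycle.
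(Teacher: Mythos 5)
There is a genuine gap in the inductive step, and it is exactly the part you flag as ``the main obstacle.'' Your construction uses same-color bijections $\phi\colon R_1\to R_2$ and $\psi\colon B_1\to B_2$ and then tries to realize each cycle of $\psi\circ\phi$ by single symmetric exchanges staying inside the cycle. But the SBO multi-exchange property of $\phi$ only guarantees that the \emph{red} side stays a basis during a $\phi$-aligned exchange, and the SBO property of $\psi$ only controls the \emph{blue} side during a $\psi$-aligned exchange; in a cycle of length $k\ge 2$ these two alignments disagree. Already for a 2-cycle $a_1\xrightarrow{\phi}c_1\xrightarrow{\psi}a_2\xrightarrow{\phi}c_2\xrightarrow{\psi}a_1$, neither of the natural orderings works: the exchange $(a_1,c_1)$ is safe on the red side via $\phi$ but leaves $B_1+a_1-c_1$ uncontrolled (since $\psi(c_1)=a_2$, not $a_1$), while $(a_1,c_2)$ is safe on the blue side via $\psi$ but leaves $R_1-a_1+c_2$ uncontrolled. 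The symmetric exchange axiom does give \emph{some} feasible partner for $a_1$, but it need not lie in $\{c_1,\dots,c_k\}$, so you cannot even start the cycle while staying inside it. For general $k$ the statement you need is close to the serial-symmetric-exchange conjecture, which is open (Kotlar and Ziv handle $k=2$ and Kotlar $k=3$, but not arbitrary $k$, even for SBO matroids). Citing ``the Laso\'n--Micha\l ek construction'' does not close this gap, because their construction is a different one.

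For comparison, the paper's proof (and Laso\'n--Micha\l ek's original one) uses the \emph{cross-color} bijections $\phi_1\colon R_1\to B_1$ and $\phi_2\colon R_2\to B_2$, builds the auxiliary bipartite graph on $R_1\cup B_1$ with edges $e\phi_1(e)$ and $f\phi_2(f)$, and observes that its two color classes $S$ and $T$ are bases. The route $\bP_1\to(S,T)\to\bP_2$ (or $\bP_1\to(T,S)\to\bP_2$) uses only $\phi_1$-exchanges in the first half and only $\phi_2$-exchanges in the second; since $\phi_1$ is a bijection between the two sides of $\bP_1$ itself, every $\phi_1$-exchange is guaranteed feasible on \emph{both} sides simultaneously by the SBO property of $\phi_1$ (and its inverse), and similarly for $\phi_2$. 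This is precisely the alignment your proposal lacks. The remaining step in the paper is an averaging argument showing that at least one of the two routes has weight at most $w(R_1\cup B_1)$, and the ``each element at most twice'' claim is immediate since each element appears in at most one $\phi_1$-exchange and at most one $\phi_2$-exchange. Your weight accounting and the observation that elements outside $A\cup C$ need never be touched are fine, but they are subsidiary; the unproven cycle lemma is where the proof breaks.
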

\begin{proof}
Let $\phi_1\colon R_1 \to B_1$ and $\phi_2\colon R_2\to B_2$ be bijections such that $(R_i\setminus X)\cup\phi_i(X)$ is a basis for each $X\subseteq R_i$ and $i=1,2$. Consider the bipartite graph $G$ with vertex set $R_1\cup B_1=R_2\cup B_2$, and edges of the form $e\phi_1(e)$ for $e\in R_1$ and $f\phi_2(f)$ for $f\in R_2$. We denote the color classes of $G$ by $S$ and $T$. Note that 
\begin{equation*}
S=(S\cap R_1)\cup (S\cap B_1)=(R_1\setminus (R_1\setminus S))\cup\phi_1(R_1\setminus S)     
\end{equation*}
and
\begin{equation*}
T=(T\cap R_1)\cup (T\cap B_1)=(R_1\setminus (R_1\setminus T))\cup\phi_1(R_1\setminus T),
\end{equation*}
hence both $S$ and $T$ are bases of $M$. Let us define the basis pairs $\bP=(S,T)$ and $\bP'=(T,S)$. 

By exchanging the elements between $R_1\setminus S$ and $S\setminus R_1$ according to $\phi_1$, we get a sequence of weight $w(R_1\triangle S)$ that transforms $\bP_1$ into $\bP$. By exchanging the elements between $S\setminus R_2$ and $R_2\setminus S$ according to $\phi_2$, we get a sequence of weight $w(R_2\triangle S)$ that transforms $\bP$ into $\bP_2$. The concatenation of these two sequences transforms $\bP_1$ into $\bP_2$, has total weight $w(R_1\triangle S)+w(R_2\triangle S)$, and uses each element at most twice.

By exchanging the elements between $R_1\setminus T$ and $T\setminus R_1$ according to $\phi_1$, we get a sequence of weight $w(R_1\triangle T)$ that transforms $\bP_1$ into $\bP'$. By exchanging the elements between $T\setminus R_2$ and $R_2\setminus T$ according to $\phi_2$, we get a sequence of weight $w(R_2\triangle T)$ that transforms $\bP$ into $\bP_2$. The concatenation of these two sequences transforms $\bP_1$ into $\bP_2$, has total weight $w(R_1\triangle T)+w(R_2\triangle T)$, and uses each element at most twice.

Since
\begin{align*}
    & w(R_1\triangle S)+w(R_2\triangle S)+w(R_1\triangle T)+w(R_2\triangle T)\\
    {}={} & (w(R_1\triangle S)+w(R_1\triangle T))+(w(R_2\triangle S)+w(R_2\triangle T))\\
    {}={} & (w(R_1)+w(B_1))+(w(R_2)+w(B_2)),
\end{align*}
at least one of the above defined sequences has total weight at most $w(R_1\cup B_1)=w(R_2\cup B_2)$ and uses each element at most twice. This concludes the proof of the theorem.
\end{proof}

\subsection{Split matroids}

The introduction of split matroids was motivated by the study of matroid polytopes from a geometry point of view~\cite{joswig2017matroids}. Besides their immediate applications in tropical geometry, split matroids generalize paving matroids, a class that plays a fundamental role among matroids. Crapo and Rota~ \cite{crapo1970foundations} and Mayhew, Newman, Welsh and Whittle~\cite{mayhew2011asymptotic} conjectured that the asymptotic fraction of matroids on $n$ elements that are paving tends to $1$ as $n$ tends to infinity, hence the same is expected to hold for split matroids as well. 

In \cite{berczi2022exchange}, Bérczi and Schwarcz showed that Hamidoune's conjecture holds for split matroids. In fact, they proved that the exchange distance of compatible basis pairs $\bP_1=(A_1,A_2)$ and $\bP_2=(B_1,B_2)$ of a rank-$r$ split matroid is at most $\min\{r,r-|A_1\cap B_1|+1\}$, and that a shortest sequence transforming $\bP_1$ into $\bP_2$ can be found in polynomial time if the matroid in question is given by an independence oracle. By building on their proof, we show how to deduce a strengthening of their result.

\begin{thm} \label{thm:split}
Let $\bP_1 = (R_1,B_1)$ and $\bP_2 = (R_2,B_2)$ be compatible pairs of disjoint bases of a rank-$r$ split matroid $M$ over a ground set $S$, and let $w\colon  S\to \mathbb{R}_+$ be a weight function. Then there exists a sequence of exchanges of length at most  $\min\{r,r-|R_1\cap R_2|+1\}$ and total weight at most $w(R_1\cup B_1) = w(R_2\cup B_2)$ that transforms $\bP_1$ into $\bP_2$ and uses each element at most twice.
\end{thm}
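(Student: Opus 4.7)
The plan is to adapt the construction from \cite{berczi2022exchange} that establishes the length bound $\min\{r, r-|R_1\cap R_2|+1\}$ by threading a weight analysis through it. The starting observation is that any strictly monotone exchange sequence from $\bP_1$ to $\bP_2$ must have length exactly $r-|R_1\cap R_2|$, use each element at most once, and involve only elements of $(R_1\cap B_2)\cup(R_2\cap B_1)\subseteq R_1\cup B_1$; hence its total weight is automatically at most $w(R_1\cup B_1)$. Therefore, whenever the Bérczi--Schwarcz construction produces a strictly monotone sequence, all three required conclusions follow immediately. Since a split matroid is the direct sum of an elementary split matroid and some uniform matroids, and the problem decomposes additively across summands while uniform summands admit strictly monotone exchange sequences in any order, it suffices to concentrate on the elementary split summand.

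The delicate case is when the construction yields a sequence of length $r-|R_1\cap R_2|+1$, which only happens when $|R_1\cap R_2|\geq 1$ and some hyperplane $H_i$ from the elementary split decomposition blocks every strictly monotone route. A simple parity count on how many ``slots'' each element can occupy shows that in any such sequence exactly one element is used twice and the remaining elements at most once: either an element of $(R_1\cap R_2)\cup(B_1\cap B_2)$ is used twice, or an element of $R_1\triangle R_2$ is used three times (with all other elements of $R_1\triangle R_2$ used once). In the Bérczi--Schwarcz construction the extra step is a ``detour'' that temporarily moves an element $x\in (R_1\cap R_2)\cap H_i$ (or $x\in(B_1\cap B_2)\setminus H_i$) to the wrong side of the partition and restores it later, so $x$ is the unique element used twice and the total weight equals $w(R_1\triangle R_2)+2w(x)$. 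The required bound therefore reduces to showing $2w(x)\leq w((R_1\cap R_2)\cup(B_1\cap B_2))$.

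To secure this inequality I will revisit the case analysis of \cite{berczi2022exchange} and show that the detour pivot $x$ can be chosen from a two-sided set of candidates drawn from both $(R_1\cap R_2)\cap H_i$ and $(B_1\cap B_2)\setminus H_i$; choosing the minimum-weight element then gives the bound by an averaging argument analogous to the one used in Theorem~\ref{thm:sbo}. The flexibility should follow from $|R_1\cap R_2|=|B_1\cap B_2|\geq 1$ combined with the structural role of the blocking hyperplane $H_i$: any element of $R_1\cap R_2$ inside $H_i$ is essentially interchangeable with any element of $B_1\cap B_2$ outside $H_i$ as the detour pivot, since the obstruction is a single tightness condition on $H_i$. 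The main obstacle will be to verify this two-sided interchangeability rigorously across every subcase of the Bérczi--Schwarcz proof; once it is in place, the lightest admissible pivot delivers the weight bound $w(R_1\cup B_1)$ simultaneously with the length bound $\min\{r,r-|R_1\cap R_2|+1\}$ and the at-most-twice usage property.
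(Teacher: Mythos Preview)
Your high-level strategy matches the paper's: reduce to elementary split matroids, handle the strictly monotone case trivially, and in the remaining case show there are at least two admissible ``detour pivots'' so that the lightest one satisfies $2w(x)\le w((R_1\cap R_2)\cup(B_1\cap B_2))$. However, your description of the obstruction is oversimplified in a way that would derail the verification you propose.

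The blocking is \emph{not} a single hyperedge $H_i$. What \cite{berczi2022exchange} actually shows is that after a maximal strictly monotone sequence one is left with \emph{four} pairwise distinct hyperedges $H_1,H_2,H_3,H_4$ such that $R_1,R_2$ are $H_1$- and $H_3$-tight while $B_1,B_2$ are $H_2$- and $H_4$-tight, and the set of admissible detour pivots is
\[
\big((R_1\cap R_2)\cup(B_1\cap B_2)\big)\setminus\big((H_1\triangle H_3)\cap(H_2\triangle H_4)\big),
\]
not $(R_1\cap R_2)\cap H_i$ together with $(B_1\cap B_2)\setminus H_i$. Your ``two-sided interchangeability'' heuristic---that the obstruction is a single tightness condition, so one can freely swap an element of $R_1\cap R_2$ inside $H_i$ for one of $B_1\cap B_2$ outside $H_i$---does not describe the actual degrees of freedom and would not survive the case analysis you plan to run. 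The genuine technical core, which your proposal does not identify, is the inequality
\[
\big|\big((R_1\cap R_2)\cup(B_1\cap B_2)\big)\setminus\big((H_1\triangle H_3)\cap(H_2\triangle H_4)\big)\big|\ge 2,
\]
and its proof is not an averaging trick but a chain of tightness computations: one shows $B_1$ cannot be $H_1$-tight (because $B_1\not\subseteq H_1\cup H_4$), deduces $|B_1\cap B_2\cap H_1|\le|R_1\cap R_2\cap H_1|-1$ and the analogous bound for $H_3$, and combines these with $H_1\cap H_3\subseteq R_1\subseteq H_1\cup H_3$ to force at least two elements outside $H_1\triangle H_3$. Once this is in hand, picking the minimum-weight admissible pivot finishes exactly as you say; but getting there requires the four-hyperedge structure, not a single $H_i$.
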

\begin{proof}
Similarly to the proof in~\cite{berczi2022exchange}, we first show that it suffices to verify the theorem for elementary split matroids. If $M$ is not elementary, then all but at most one of its components are uniform matroids. Let $M_0=(S_0,\cB_0)$ denote, if exists, the elementary split component, and let $M_1=(S_1,\cB_1),\dots,M_t=(S_t,\cB_t)$ be the uniform connected components of $M$. Define $R_1^i\coloneqq R_1\cap S_i$, $B_1^i\coloneqq B_1\cap S_i$, $R_2^i\coloneqq R_2\cap S_i$, $B_2^i\coloneqq B_2\cap S_i$ for $i=0,\dots,t$. Clearly, $R_1^i$, $B_1^i$, $R_2^i$ and $B_2^i$ are bases of the uniform matroid $M_i$. Take a sequence of exchanges of length at most $\min\{r_0,r_0-|R_1^0\cap R_2^0|+1\}$ and total weight at most $w(R_1^0\cup B_1^0)$ that transforms $(R^0_1,B^0_1)$ into $(R^0_2,B^0_2)$ and uses each element in $S_0$ at most twice, where $r_0$ is the rank of $M_0$. For each uniform matroid $M_i$, take a strictly monotone sequence of exchanges that transforms $(R_1^i,B_1^i)$ into $(R_2^i,B_2^i)$; it is not difficult to see that such a sequence exists for uniform matroids. The concatenations of these sequences result in an exchange sequence of length at most $\min\{r_0,r_0-|R_1^0\cap R_2^0|+1\}+\sum_{i=1}^t [r_i-|R_1^i\cap R_2^i|]\leq \min\{r,r-|R_1\cap R_2|+1\}$ and total weight at most $w(R_1\cup B_1)=w(R_2\cup B_2)$ that transforms $(R_1,B_1)$ into $(R_2,B_2)$ and uses each element at most twice. 

By the above, we may assume that $M$ is an elementary split matroid.  Let $\cH=\{H_1,\dots,H_q\}$ be the hypergraph that defines the matroid, and let $r_1,\dots,r_q$ denote the upper bounds on the size of the intersection of an independent set with the hyperedges. The length and the weight of any strictly monotone sequence of exchanges that transforms $\bP_1$ into $\bP_2$ has length $r-|R_1\cap R_2|$ and weight $w(R_1\triangle R_2)\leq w(R_1\cup B_1)=w(R_2\cup B_2)$, respectively, and uses each element at most once. Hence assume that this is not the case, that is, there exists no strictly monotone exchange sequence between $\bP_1$ and $\bP_2$. In such a case, \cite{berczi2022exchange} showed that $\min\{r, r-|R_1 \cap R_2|+1\} = r-|R_1 \cap R_2|+1$ and after performing a longest strictly monotone exchange sequence from $\bP_1$ with respect to $\bP_2$, one is left with a basis pair $\bP'_1=(R'_1,B'_1)$ for which there exists pairwise distinct hyperedges $H_1$, $H_2$, $H_3$ and $H_4$ in $\cH$ with the following properties:\footnote{Here \ref{it:tight} follows from Claims 8 and 11, \ref{it:sym} follows from Claims 9 and 10, and \ref{it:rest} follows from Claims 9 and 11 of \cite{berczi2022exchange}.} 
\begin{enumerate}[label=\alph*)]
\item \label{it:tight} $R_1$, $R'_1$ and $R_2$ are $H_1$- and $H_3$-tight, while $B_1$, $B'_1$ and $B_2$ are $H_2$- and $H_4$-tight,
\item \label{it:sym} $(R_1 \cap B_2) \cup (B_1 \cap R_2) \subseteq (H_1 \triangle H_3)\cap (H_2 \triangle H_4)$,
\item \label{it:rest} $|R'_1 \cap B_2 \cap H_1 \cap H_2| = |R'_1 \cap B_2 \cap H_3 \cap H_4| = |B'_1 \cap R_2 \cap H_1 \cap H_4| = |B'_1 \cap R_2 \cap H_2 \cap H_3| > 0$.
\end{enumerate}
Moreover, for each $z \in ((R_1 \cap R_2) \cup (B_1 \cap B_2)) \setminus ((H_1 \triangle H_3)\cap (H_2 \triangle H_4))$, \cite{berczi2022exchange} showed the existence of an exchange sequence that transforms $\bP'_1$ into $\bP_2$, uses $z$ twice, uses the elements of $(R'_1 \cap B_2) \cup (B'_1 \cap R_2)$ once, and does not use any other elements. Consider the concatenation of the strictly monotone exchange sequence from $\bP_1$ to $\bP'_1$ and the above exchange sequence from $\bP'_1$ to $\bP_2$. The sequence thus obtained uses each element at most twice, has length $r-|R_1 \cap R_2| + 1$ and total weight \[2 w(z) + w((R_1 \cap B_2)\cup (R_2 \cap B_1)) = w(R_1 \cup B_1) + 2w(z) - w((R_1 \cap R_2) \cup (B_1 \cap B_2)).\] Therefore, it suffices to show that there exists an element $z \in ((R_1 \cap R_2) \cup (B_1 \cap B_2)) \setminus ((H_1 \triangle H_3) \cap (H_2 \cap H_4))$ with weight $w(z) \le \frac12 w((R_1 \cap R_2) \cup (B_1 \cap B_2))$. We prove that
\begin{equation*}
  \tag*{($\star$)}\label{star} |((R_1 \cap R_2) \cup (B_1 \cap B_2)) \setminus ((H_1 \triangle H_3)\cap(H_2 \triangle H_4))| \ge 2
\end{equation*}
holds from which the statement follows. Using properties \ref{it:sym} and \ref{it:rest}, we get
\begin{align*}
0 
{}&{}< 
|B'_1 \cap R_2 \cap H_2 \cap H_3|\\
{}&{}\le 
|B_1 \cap R_2 \cap H_2 \cap H_3|\\
{}&{}=
|(B_1 \cap R_2) \setminus (H_1 \cup H_4)|\\
{}&{}\le
|B_1 \setminus (H_1 \cup H_4)|.    
\end{align*}
By \cite[Lemma~6]{berczi2022exchange}, if a set $F$ of size $r$ is both $H$-tight and $H'$-tight for some hyperedges $H, H'\in \mathcal{H}$ then $H \cap H' \subseteq F \subseteq H \cup H'$.  As $B_1$ is $H_4$-tight and $B_1 \not \subseteq H_1 \cup H_4$, it follows that $B_1$ is not $H_1$-tight, that is, $|B_1 \cap H_1| \le r_1-1$. Using that $R_2$ is $H_1$-tight, we get
\begin{align*}
|B_1 \cap B_2 \cap H_1|
{}&{}= |B_1 \cap H_1| - |B_1 \cap R_2 \cap H_1| \\
{}&{}\le (r_1 -1) - |B_1 \cap R_2 \cap H_1| \\
{}&{}= |R_2 \cap H_1| - |B_1 \cap R_2 \cap H_1| - 1\\
{}&{}= |R_1 \cap R_2 \cap H_1| -1.
\end{align*}
An analogous reasoning shows that the inequality \[|B_1 \cap B_2 \cap H_3| \le |R_1 \cap R_2 \cap H_3| -1\] holds as well.
As $R_1$ is both $H_1$- and $H_3$-tight, $H_1 \cap H_3 \subseteq R_1 \subseteq H_1 \cup H_3$ holds by \cite[Lemma~6]{berczi2022exchange}. Then 
\begin{align*}
    |B_1 \cap B_2| - |(B_1 \cap B_2) \setminus (H_1 \cup H_3)| {}&{} = |B_1 \cap B_2 \cap H_1| + |B_1 \cap B_2 \cap H_3| \\
    {}&{} \le |R_1 \cap R_2 \cap H_1| + |R_1 \cap R_2 \cap H_3| -2 \\
    {}&{} = |R_1 \cap R_2| + |R_1 \cap R_2 \cap H_1 \cap H_3| -2.
\end{align*}
Using $|B_1 \cap B_2| = |R_1 \cap R_2|$, we get
\begin{align*}
2
{}&{}\le 
|R_1 \cap R_2 \cap H_1 \cap H_3| + |(B_1 \cap B_2) \setminus (H_1 \cup H_3)|\\
{}&{}\le 
|((R_1 \cap R_2) \cup (B_1 \cap B_2)) \setminus (H_1 \triangle H_3)|,    
\end{align*}
implying~\ref{star} and thus concluding the proof of the theorem.
\end{proof}

\section{Wheels}
\label{sec:wheels}

Our first main result is a proof of Conjecture~\ref{conj:weighted} for the graphic matroid of wheels.  In fact, we prove a much stronger statement: we verify that for any pair $\bP_1=(R_1,B_1)$ and $\bP_2=(R_2,B_2)$ of colorings of a wheel $G=(V,E)$, there exists a sequence of exchanges of length at most $r$ and total weight at most $w(E)$ that transforms $\bP_1$ into $\bP_2$ and uses each edge at most twice.

Throughout the section, we assume that $\bP_1$ has positive orientation. For ease of notation, we introduce $\varphi_\oplus\coloneqq \varphi_{\bP_1,+}$ and $\varphi_\ominus\coloneqq\varphi_{\bP_1,-}$. Recall that both $\varphi_\ominus$ and $\varphi_\oplus$ provide a bijection between spokes and rim edges.

First we settle the case when the two colorings have the same orientation.

\begin{lem}\label{lem:same}
Let $\bP_1=(R_1,B_1)$ and $\bP_2=(R_2,B_2)$ be colorings of a wheel $G=(V,E)$ with the same orientation. Then there exists a strictly monotone sequence of exchanges that transforms $\bP_1$ into $\bP_2$.
\end{lem}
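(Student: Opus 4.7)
The plan is to exploit the fact that for any coloring of the wheel with positive orientation, the colors of the rim edges are entirely determined by the colors of the spokes. I would first verify the structural claim that in any positively oriented coloring $\bP=(R,B)$, the edges $e$ and $\varphi_\ominus(e)$ have opposite colors for every spoke $e$. The argument is a short case analysis on whether $e$ and the spoke preceding it (in the direction opposite the orientation of $\bP$) lie in the same interval: if they do, then acyclicity of the two spanning trees forces $\varphi_\ominus(e)$ to take the opposite color, while if they do not, $\varphi_\ominus(e)$ is a boundary edge whose color is prescribed by positive orientation.

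Since $\bP_1$ and $\bP_2$ share the positive orientation, the bijection $\varphi_\ominus$ is the same for both, and the structural claim applies to each of them. The consequence is that the edges on which the two colorings disagree come in matched pairs: a spoke $e$ has different colors in $\bP_1$ and $\bP_2$ if and only if $\varphi_\ominus(e)$ does, and in that case one element of $\{e,\varphi_\ominus(e)\}$ lies in $R_1\cap B_2$ while the other lies in $B_1\cap R_2$. The candidate sequence is then immediate: for each spoke $e$ with $e\in R_1\triangle R_2$, perform the exchange swapping $e$ and $\varphi_\ominus(e)$. Each individual step uses one element from $R_1\cap B_2$ and one from $B_1\cap R_2$, increases the size of the intersection of the first basis with $R_2$ by exactly one, and collectively the steps use every element of $R_1\triangle R_2$ exactly once, matching the definition of a strictly monotone sequence.

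What remains is to justify that these exchanges can actually be chained together. Each individual step is feasible because $(e,\varphi_{\bP,-}(e))$ is a feasible exchange in any coloring $\bP$. To concatenate, positive orientation (and therefore the specific bijection $\varphi_\ominus$) must be preserved after every swap; this is a direct check, since simultaneously flipping the colors of $e$ and $\varphi_\ominus(e)$ keeps the relation ``every spoke is opposite in color to its $\varphi_\ominus$-image'' intact at every spoke, which is equivalent to positive orientation by the structural claim. Finally, the pairs $\{e,\varphi_\ominus(e)\}$ associated to distinct spokes are disjoint, so the exchanges commute and the order in which they are carried out is irrelevant.

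The main obstacle of the argument is really the initial structural claim identifying rim edge colors with spoke colors under positive orientation; once that is in hand, the decomposition of $R_1\triangle R_2$ into $\varphi_\ominus$-pairs and the construction of the strictly monotone sequence reduce to bookkeeping.
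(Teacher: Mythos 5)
Your structural claim (in any positively oriented coloring, each spoke $e$ and the fixed rim edge $\varphi_\ominus(e)$ have opposite colors) is correct, and so is the observation that swapping such a pair preserves positive orientation \emph{provided the result is again a coloring}. The gap is in the final step. You assert that because "the pairs $\{e,\varphi_\ominus(e)\}$ associated to distinct spokes are disjoint, \ldots the exchanges commute and the order in which they are carried out is irrelevant," and you support this by the claim that $(e,\varphi_{\bP,-}(e))$ is a feasible exchange in any coloring $\bP$. The latter claim is false: if $e$ is the \emph{only} spoke of its color in the current intermediate coloring, then after swapping $e$ with $\varphi_\ominus(e)$, that color class contains no spokes at all and therefore consists of all rim edges, which is the outer cycle and not a spanning tree. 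Concretely, in the wheel on four vertices with $R_1$ having a single red spoke $s_1$ and $R_2$ having a single red spoke $s_2\neq s_1$, flipping $s_1$ before $s_2$ is infeasible at the first step, while the reverse order works. Disjointness of the edge pairs does not give commutativity because feasibility of a symmetric exchange is a global condition on the resulting partition.

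The paper's proof avoids this by performing the flips "in an arbitrary order, only paying attention to always have at least one spoke in both color classes." This proviso is both necessary (as above) and can always be met, by a short greedy check: if at some point one color class is down to a single spoke which is still to be flipped, then either there is also a pending flip on the other side (do that one first), or there is not --- but in the latter case all remaining flips would deplete the first color class to zero spokes, contradicting that $\bP_2$ is a coloring; and since a wheel has at least three spokes, both color classes cannot simultaneously have a single spoke. If you replace your "order is irrelevant" sentence with this ordering constraint and its justification, your argument coincides with the paper's proof.
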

\begin{proof}
Exchange each spoke $e$ that has different color in $\bP_1$ than in $\bP_2$ with its pair $\varphi_\ominus(e)$ in an arbitrary order, only paying attention to always have at least one spoke in both color classes. Once the spokes have the right colors, that is, they are colored as in $\bP_2$, the rim edges are also colored as required. Indeed, the orientation was not changed during the procedure and the coloring of the spokes together with the orientation of the coloring uniquely determines the colors of the rim edges. 
\end{proof}

\begin{rem}
Exchanging a spoke $e$ with its pair $\varphi_\ominus(e)$ does not reverse the orientation of the coloring. Therefore, if the initial coloring $\bP_1=(R_1,B_1)$ and the target coloring $\bP_2=(R_2,B_2)$ have different orientations, then any sequence of feasible exchanges must go through a coloring of the graph with only two intervals. This results in reversing the orientation of a coloring to be costly, as one needs to make exchange steps that, at least seemingly, bring the colorings further away from each other in the sense that they increase the symmetric difference of $R_1$ and $R_2$. The class of wheels is particularly interesting as they admit colorings $\bP_1=(R_1,B_1)$ and $\bP_2=(R_2,B_2)$ for which the minimum number of steps required to transform $\bP_1$ into $\bP_2$ is arbitrarily large compared to the trivial lower bound $(n-1)-|R_1\cap R_2|$; see Figure~\ref{fig:large} for an example.
\end{rem}

\begin{figure}[t!]
\centering
\begin{subfigure}[t]{0.47\textwidth}
\centering
\includegraphics[width=.6\linewidth]{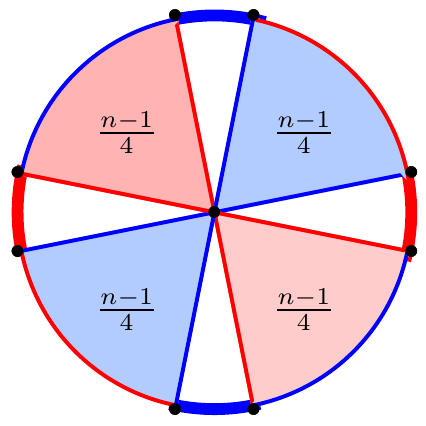}
\caption{A coloring $\bP_1=(R_1,B_1)$ with positive orientation. Thick edges denote the boundary edges.}
\label{fig:gapa}
\end{subfigure}\hfill
\begin{subfigure}[t]{0.47\textwidth}
\centering
\includegraphics[width=.6\linewidth]{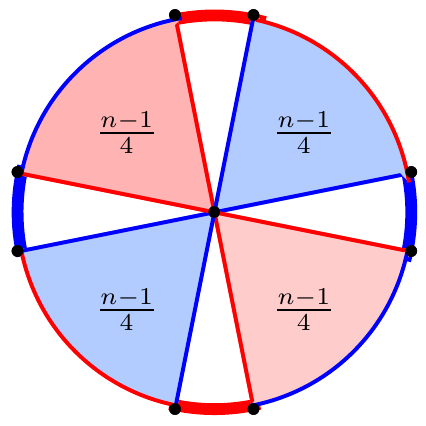}
\caption{A coloring $\bP_2=(R_2,B_2)$ with negative orientation. Thick edges denote the boundary edges.}
\label{fig:gapb}
\end{subfigure}
\caption{A pair of $\bP_1=(R_1,B_1)$ and $\bP_2=(R_2,B_2)$ with $(n-1)-|R_1\cap R_2|=2$. The colorings have opposite orientations, hence any sequence of exchanges that transforms $\bP_1$ into $\bP_2$ goes through a coloring having two intervals, implying that their exchange distance is at least $\frac{n-1}{4}$.}
\label{fig:large}
\end{figure}

Next we consider colorings with different orientations and a bounded number of intervals in one of the color classes.

\begin{lem}\label{lem:24}
Let $\bP_1=(R_1,B_1)$ and $\bP_2=(R_2,B_2)$ be colorings of a wheel $G=(V,E)$ with different orientations where $\bP_1$ has at most four intervals, and let $w\colon E\to\mathbb{R}_+$ be a weight function. Then there exists a sequence of exchanges of length at most $n-1$ and total weight at most $w(E)$ that transforms $\bP_1$ into $\bP_2$ and uses each edge at most twice.
\end{lem}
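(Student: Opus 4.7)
My plan is a case analysis on the number of intervals of $\bP_1$, which is even and hence equals $2$ or $4$, with the goal of reducing everything to the 2-interval case where an orientation reversal is cheap. The overall strategy is to produce an intermediate coloring $\bP'$ with exactly two intervals having the \emph{same} orientation as $\bP_2$, so that Lemma~\ref{lem:same} can finish the task via a strictly monotone sequence from $\bP'$ to $\bP_2$.

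For the 2-interval subcase, I would flip the orientation of $\bP_1$ by a short sequence of exchanges localized near the two boundary rim edges. Since a 2-interval coloring admits feasible exchanges beyond the $\varphi_\ominus$-pairs (as noted in the excerpt), one can, for instance, swap a boundary rim edge against a suitably chosen spoke on the other end, producing a 2-interval coloring $\bP'$ of the opposite orientation. For the 4-interval case, I would first pick the interval $I$ of minimum total weight among the four intervals and collapse it by $|I|$ consecutive exchanges of the form $(e,\varphi_\ominus(e))$ for spokes $e$ at the boundary of $I$; after these exchanges the two intervals of the same colour as $I$ merge, producing a 2-interval intermediate to which the 2-interval subcase is then applied.

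The main obstacles I anticipate lie in the two quantitative bounds. For the length bound $n-1$, the collapse phase uses $|I|$ exchanges and the 2-interval subcase adds a few more before the strictly monotone tail; a counting argument noting that collapse steps already advance us toward $\bP_2$ for every spoke of $I$ that disagrees with $\bP_2$ should give the total $\le n-1$. The weight bound is more delicate: the strictly monotone tail contributes exactly the weight of the edges in the final symmetric difference, while the flip and collapse steps may use a few edges twice, contributing extra weight that must be absorbed by the slack $w(E)-w(R_1\triangle R_2) = w((R_1\cap R_2)\cup(B_1\cap B_2))$. Choosing $I$ to be the minimum-weight interval among the four, together with selecting the cheaper boundary rim edge for the orientation flip and a careful bookkeeping of which edges are genuinely used twice, should give the $w(E)$ bound.
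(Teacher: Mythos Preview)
Your high-level architecture matches the paper's: collapse to two intervals, reverse the orientation by a local exchange, then finish with Lemma~\ref{lem:same}. However, the two concrete choices you make are where the proof actually lives, and both are off.

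\medskip
\noindent\textbf{Which interval to collapse.} You propose to collapse the interval of minimum \emph{total} weight among all four. This fails both bounds. For the length bound, collapsing a red interval gives a sequence of length at most $r_1+r_2$ while collapsing a blue one gives at most $b_1+b_2$; only one of these is guaranteed to be $\le n-1$, so the color of the collapsed interval must be chosen first by the inequality $r_1+r_2\le n-1$ (say). For the weight bound, the extra cost of collapsing $I_j$ is not $w(I_j)$ but $2x_j$, where $x_j$ is the weight of the edges in $I_j\cup\varphi_\ominus(I_j)$ that are \emph{already} colored as in $\bP_2$ (these are precisely the edges used twice). Minimising $x_j+y_j$ does not control $x_j$: with $x_1=10,\ y_1=0$ and $x_i=1,\ y_i=100$ for $i=2,3,4$, your rule picks $I_1$, but then the total weight is $2x_1+\sum_i y_i=320>313=w(E)$. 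The paper fixes both issues at once: after choosing the color by the length criterion, it collapses the red $I_j$ with $x_j=\min\{x_1,x_3\}$, so $2x_j\le x_1+x_3\le\sum_i x_i$.

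\medskip
\noindent\textbf{The two-interval flip.} Your sketch assumes the orientation can always be reversed by one cheap exchange near a boundary rim edge. This is true when both intervals have length at least two (the paper's Case~1.2, where all four candidate exchanges $(a,c),(a,d),(b,c),(b,d)$ are feasible and one of them is strictly monotone). But if one interval has length one, the pair $\{b,c\}$ that needs to change may be infeasible, and the paper must use a length-$3$ detour that hits one of $a,d$ twice, choosing whichever is lighter. After a $4\!\to\!2$ collapse, the surviving interval of the collapsed color can have length one, so you land exactly in this bad subcase; the paper's Case~2.2 handles it (including a color-swap trick when $r_1+r_2=b_1+b_2=n-1$). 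None of this is visible in your outline, and ``selecting the cheaper boundary rim edge'' does not by itself resolve it.
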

\begin{proof} 
We will distinguish two main cases based on the number of intervals in $\bP_1$. Recall that $\bP_1$ is assumed to have positive orientation throughout.

\medskip
\noindent \textbf{Case 1.} $\bP_1$ has two intervals.

\smallskip
\noindent \textbf{Case 1.1.} $\bP_1$ has an interval of length one.

We may assume that there exists a red interval of length one, and let $c$ denote the unique spoke in it. We denote by $a$ the spoke following $c$ in negative direction, and further define $b \coloneqq  \varphi_\oplus(a) = \varphi_\ominus(c)$ and $d\coloneqq \varphi_\oplus(c)$. Note that $a$ and $b$ are blue, while $d$ is a red edge; see Figure~\ref{fig:case11}. As $\bP_2$ has negative orientation, $a$ and $\varphi_\oplus(a) = b$ have different colors in $\bP_2$, and the same holds for $c$ and $\varphi_\oplus(c)=d$. Hence the set of edges among $a$, $b$, $c$ and $d$ that have different colors in $\bP_1$ and $\bP_2$ is either $\{a,c\}, \{a,d\}, \{b,d\}$ or $\{b,c\}$. In the first three cases, changing the color of the two edges is a feasible exchange which reverses the orientation. Once the orientation of the coloring is reversed, there exists a strictly monotone exchange sequence to $\bP_2$ by Lemma~\ref{lem:same}, altogether resulting in a strictly monotone exchange sequence from $\bP_1$ to $\bP_2$.

The only remaining case is when the set of edges among $a$, $b$, $c$ and $d$ that need to change color is $\{b,c\}$. In this case, the difficulty comes from the fact that these edges do not define a feasible exchange between the two color classes. In order to overcome this, extra steps are needed to reverse the orientation. Let $s$ be an arbitrary red spoke of $\bP_2$. Note that $s \not\in \{a,c\}$ as we are in the case when $a$ and $c$ are blue in $\bP_2$. As $\bP_1$ has a unique red spoke, namely $c$, we get that $s$ is blue in $\bP_1$, and $\varphi_\oplus(s)$ is red in $\bP_1$ and blue in $\bP_2$ since $\bP_2$ has negative orientation. Consider the two exchange sequences of length three $(b,d)$, $(s,\varphi_\oplus(s))$, $(c,d)$ and $(a,c)$, $(s,\varphi_\oplus(s))$, $(a,b)$. Both of these sequences reverse the orientation of the coloring and fix the colors of the edges $a$, $b$, $c$ and $d$. Therefore, after applying any of them, there exists a strictly monotone exchange sequence to $\bP_2$ by Lemma~\ref{lem:same} that uses all the remaining edges in $E-\{a,b,c,d\}$ at most once. Thus in overall, we get an exchange sequence that uses each edge in $E-\{a,d\}$ at most once, does not use one of $a$ and $d$ and uses the other twice. Hence the length of the sequence is at most half of the number of edges, that is, $n-1$. If $w(a)\geq w(d)$, then starting the sequence with $(b,d)$, $(s,\varphi_\oplus(s))$, $(c,d)$, while if $w(a)<w(d)$, then starting the sequence with $(a,c)$, $(s,\varphi_\oplus(s))$, $(a,b)$ ensures that total weight of the exchange sequence is at most $w(E)$, concluding the proof of the case.

\smallskip
\noindent \textbf{Case 1.2.} Both intervals of $\bP_1$ have length at least two.

Let $c$ denote the last spoke of the red interval in positive direction and let $d \coloneqq \varphi_\oplus(c)$. Furthermore, let $a$ be the last spoke of the blue interval in positive direction and let $b \coloneqq \varphi_\oplus(a)$, see Figure~\ref{fig:case12}. Similarly to Case~1.1, the set of edges among $a$, $b$, $c$ and $d$ that have different colors in $\bP_1$ and $\bP_2$ is either $\{a,c\}$, $\{a,d\}$, $\{b,d\}$ or $\{b,c\}$. However, now fixing the orientation is even simpler than before as any of the exchanges $(a,c)$, $(a,d)$, $(b,c)$ and $(b,d)$ is feasible. After reversing the orientation using one of these exchanges, there exists a strictly monotone exchange sequence to $\bP_2$ by Lemma~\ref{lem:same}, altogether resulting in a strictly monotone exchange sequence from $\bP_1$ to $\bP_2$.

\medskip

\begin{figure}[t!]
\centering
\begin{subfigure}[t]{0.23\textwidth}
\centering
\includegraphics[width=.95\linewidth]{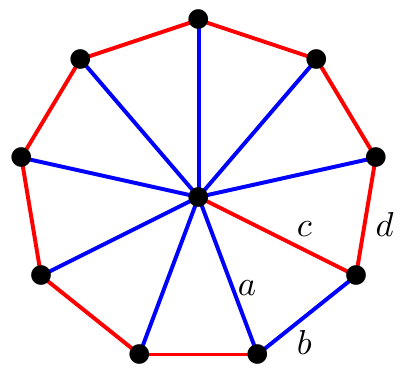}
\caption{$\bP_1$ contains two intervals, one of them having length one.}
\label{fig:case11}
\end{subfigure}\hfill
\begin{subfigure}[t]{0.23\textwidth}
\centering
\includegraphics[width=.95\linewidth]{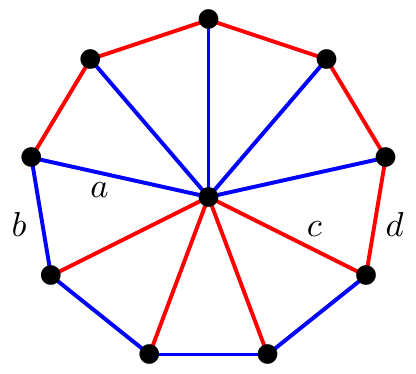}
\caption{$\bP_1$ contains two intervals, both having length at least two.}
\label{fig:case12}
\end{subfigure}\hfill
\begin{subfigure}[t]{0.23\textwidth}
\centering
\includegraphics[width=.95\linewidth]{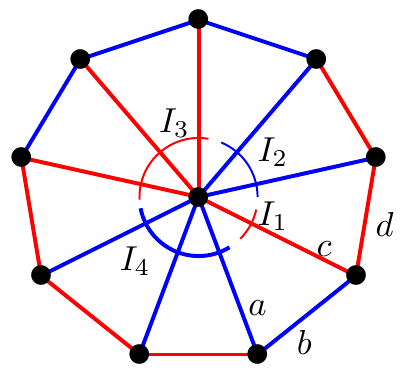}
\caption{$\bP_1$ contains four intervals, one of them having length one.}
\label{fig:case22a}
\end{subfigure}\hfill
\begin{subfigure}[t]{0.23\textwidth}
\centering
\includegraphics[width=.95\linewidth]{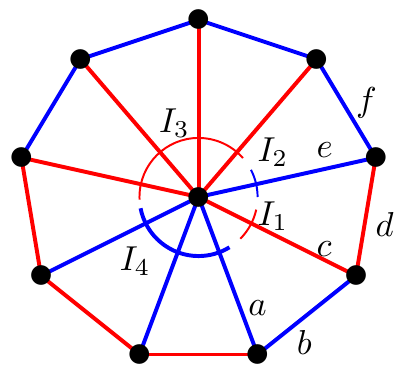}
\caption{$\bP_1$ contains both a red and a blue interval of length one.}
\label{fig:case22b}
\end{subfigure}
\caption{Illustration of the cases in the proof of Lemma~\ref{lem:24}.}
\label{fig:case1}
\end{figure}

We denote the number of spokes in $R_1$, $R_2$, $B_1$ and $B_2$ by $r_1$, $r_2$, $b_1$ and $b_2$, respectively. Let $2q$ denote the number of intervals in $\bP_1$, and let $I_1,\dots,I_{2q}$ denote the intervals in a positive direction, where intervals with odd indices have color red and intervals with even indices have color blue. Furthermore, for $1\leq i\leq 2q$, we define
\begin{eqnarray*}
&x_i\coloneqq\sum[w(e)\mid e\in I_i\cup\varphi_\ominus(I_i),\ e\ \text{has the same color in $\bP_1$ and $\bP_2$}],&\\
&y_i\coloneqq\sum[w(e)\mid e\in I_i\cup\varphi_\ominus(I_i),\ e\ \text{has different colors in $\bP_1$ and $\bP_2$}].&
\end{eqnarray*}
By the above definitions, we have $w(I_i\cup\varphi_\ominus(I_i))=x_i+y_i$ for $1\leq i\leq 2q$, and $\sum_{i=1}^{2q}(x_i+y_i)=w(E)$.

\medskip
\noindent \textbf{Case 2.} $\bP_1$ has four intervals.

Since $r_1+r_2+b_1+b_2=2(n-1)$, we have $\min \{r_1+r_2, b_1+b_2\} \le n-1$. We may assume that $r_1+r_2 \leq n-1$. We distinguish two cases based on the structure of the red intervals in $\bP_1$.

\smallskip
\noindent \textbf{Case 2.1.} $\bP_1$ has no red interval of length one.

Take one of the intervals $I_j \in \{I_1, I_3\}$ and exchange each spoke $p$ in $I_j$ with its pair $\varphi_\ominus(p)$. 
The coloring thus obtained has only two intervals from which the red has length more than one, hence there exists a strictly monotone exchange sequence to $\bP_2$ as in Case 1.2. Note that each edge is used at most twice during this process, and the edges used twice are exactly the edges of $I_j$ having the same color in $\bP_1$ and $\bP_2$. The length of the exchange sequence thus obtained is
\begin{align*}
|I_j| + |(I_j \cup I_2 \cup I_4) \cap R_2| + |I_{4-j} \cap B_2| 
{}&{} \le  
|I_j| + |(I_1 \cup I_2 \cup I_3 \cup I_4) \cap R_2| + |I_{4-j}| \\
{}&{} = 
r_1 + r_2 \\
{}&{}
\le 
n-1.
\end{align*}
In order to bound the total weight of the exchanges, note that we used edges twice only from $I_j\cup \varphi_\ominus(I_j)$, and all the other edges were used only if their color had to change. Thus the total weight of the exchange sequence is $2x_j+y_1+y_2+y_3+y_4$. As $2 \min \{x_1,x_3\} \le x_1 + \dots + x_4$, we get $2\min\{x_1,x_3\}+y_1+y_2+y_3+y_4\leq w(E)$. Hence choosing $j\in\{1,3\}$ such that $x_j=\min\{x_1,x_3\}$ leads to an exchange sequence with the required properties. 

\smallskip
\noindent \textbf{Case 2.2.} $\bP_1$ has a red interval of length one.

Our approach is similar to that of Case 2.1 with slight modifications. By applying the same algorithm, the analysis goes through if the the red interval whose edges are not used before reversing the orientation has length at least two. Indeed, the bounds on the length and weight of the exchange sequence remain valid since we did not use the length of the intervals in the proof. However, a problem occurs if at the orientation reversal step we have an interval of length one, that is, when the algorithm starts with a red interval, say $I_3$ due to $x_3\leq x_1$, and the other red interval $I_1$ has length one. 

Let $\bP'_1$ denote the coloring obtained by the exchanges so far, that is, by exchanging each spoke $p$ in $I_3$ with its pair $\varphi_\ominus(p)$. At this point, our goal is to reverse the orientation and then find a strictly monotone sequence to $\bP_2$. Note that this is exactly the same scenario we had in Case 1.1, hence we use an analogous notation that we had there. That is, let $c$ denote the unique red spoke of $\bP'_1$ and $a$ be the spoke following $c$ in a negative direction. Furthermore, define $b\coloneqq\varphi_\oplus(a)=\varphi_\ominus(c)$ and $d \coloneqq \varphi_\oplus(c)$, see Figure~\ref{fig:case22a}. Similarly to Case~1.1, the set of edges among $a$, $b$, $c$ and $d$ that have different colors in $\bP'_1$ and $\bP_2$ is either $\{a,c\}$, $\{a,d\}$, $\{b,d\}$ or $\{b,c\}$. In the first three cases, changing the color of the two edges is a feasible exchange which reverses the orientation and fixes the color of all four edges $a$, $b$, $c$ and $d$. Once the coloring is reversed, there exists a strictly monotone exchange sequence to $\bP_2$ by Lemma~\ref{lem:same}. The length and weight of the exchange sequence can be bounded analogously to Case~2.1.

The only remaining case is when the set of edges among $a$, $b$, $c$ and $d$ that need to change color is $\{b,c\}$. Let $s$ be an arbitrary red spoke of $\bP_2$. Note that $s \not\in \{a,c\}$ as we are in the case when $a$ and $c$ are blue in $\bP_2$. As $\bP'_1$ has a unique red spoke, namely $c$, we get that $s$ is blue in $\bP'_1$, and $\varphi_\oplus(s)$ is red in $\bP'_1$ and blue in $\bP_2$ since $\bP_2$ has negative orientation. 
Consider the two exchange sequences of length three $(b,d)$, $(s,\varphi_\oplus(s))$, $(c,d)$ and $(a,c)$, $(s,\varphi_\oplus(s))$, $(a,b)$. Both of these sequences change the orientation of the coloring and fix the colors of the edges $a$, $b$, $c$ and $d$. Therefore, after applying any of them, there exists a strictly monotone exchange sequence to $\bP_2$ by Lemma~\ref{lem:same}. If $w(a)\geq w(d)$, then use the sequence $(b,d)$, $(s,\varphi_\oplus(s))$, $(c,d)$ for reversing the orientation, and use $(a,c)$, $(s,\varphi_\oplus(s))$, $(a,b)$ otherwise. In what follows, we bound the length and weight of the sequence obtained.

Suppose first that $r_1+r_2<n-1$. The length of the exchange sequence is \[|I_3| + 3 + |(I_2 \cup I_3 \cup I_4 -s) \cap R_2| = (r_1-1) + 3 + (r_2-1) = r_1 + r_2 + 1 \le n-1.\] We used edges twice only from $I_3 \cup \varphi_\ominus(I_3)\cup\{d\}$ if $w(a) \ge w(d)$, and only from $I_3 \cup \varphi_\ominus(I_3)\cup \{a\}$ otherwise. Thus the total weight of the exchange sequence is at most \[2x_3 + 2 \min\{w(a), w(d)\} + y_1 + y_2 + y_3 + y_4 \le 2x_3 + x_2 + x_4 + y_1 + y_2 + y_3 + y_4 \le w(E),\] where we used that $2\min \{w(a), w(d)\} \le 2 \min \{x_4, x_2 \} \le x_2 + x_4$ and $x_3 \le x_1$.

We are left with the case when $r_1+r_2=n-1$, there exists a red interval consisting of a single spoke $c$, and the edges $c$ and $\varphi_\ominus(c)$ have different colors in $\bP'_1$ and $\bP_2$. Then, by $r_1+r_2+b_1+b_2=2n-2$, we get that $b_1+b_2=n-1$ also holds. Up to this point the two colors played different roles in the proof of Case 2 as we used the fact that $r_1+r_2\leq n-1$. However, now the same inequality holds for the number of blue spokes as well, hence we can switch the roles of the two colors. In particular, if none of the blue intervals have length one, then we are done. Thus suppose that the red interval $I_1$ consisting of the single spoke $c$ is followed in positive direction by a blue interval $I_2$ consisting of the single spoke $e$; the case when a blue interval of length one is followed in positive direction by a red interval of length one can be proved analogously. We denote by $a$ the spoke before $c$ in negative direction, and further define $b \coloneqq  \varphi_\oplus(a) = \varphi_\ominus(c)$, $d\coloneqq \varphi_\oplus(c)=\varphi_\ominus(e)$ and $f\coloneqq\varphi_\oplus(e)$, see Figure~\ref{fig:case22b}. Recall that we are in the case when the set of edges among $a$, $b$, $c$ and $d$ that need to change color is $\{b,c\}$. However, this means that among the edges $c$, $d$, $e$ and $f$, the set of edges that need to change color cannot be $\{d,e\}$. That is, if we start by exchanging each spoke $p$ in $I_4$ with its pair $\varphi_\ominus(p)$, then the resulting coloring $\bP''_1$ can be transformed into $\bP_2$ by a strictly monotone sequence of exchanges, and the analysis of the second paragraph of Case 2.2 applies.
\end{proof}

Our last technical lemma shows that when one of the colorings has at least six intervals, then there exists a sequence of exchanges that has low weight with respect to two arbitrary weight functions simultaneously.

\begin{lem}\label{lem:more}
Let $\bP_1=(R_1,B_1)$ and $\bP_2=(R_2,B_2)$ be colorings of a wheel $G=(V,E)$ with different orientations such that $\bP_1$ has at least six intervals, and let $w_1,w_2\colon E\to\mathbb{R}_+$ be weight functions. Then there exists a sequence of exchanges of total $w_i$-weight at most $w_i(E)$ for $i=1,2$ that transforms $\bP_1$ into $\bP_2$ and uses each edge at most twice.
\end{lem}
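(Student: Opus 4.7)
I would extend the strategy of Case~2 of Lemma~\ref{lem:24} to colorings with $2q \geq 6$ intervals, using the added freedom in the choice of exchanged intervals to meet both weight bounds at once. For any nonempty, proper set $K$ of consecutive intervals of $\bP_1$, let
\[
J(K) \coloneqq \{\, j : I_j \text{ is red and } I_j \notin K\,\} \cup \{\, j : I_j \text{ is blue and } I_j \in K\,\},
\]
and consider the two-phase procedure that first exchanges every spoke $p \in \bigcup_{j \in J(K)} I_j$ with its partner $\varphi_\ominus(p)$, in any order that keeps both color classes nonempty at each step, and then applies Case~1.2 of Lemma~\ref{lem:24}. The first phase preserves positive orientation and turns $\bP_1$ into an intermediate coloring $\bP'_1$ with exactly two intervals, the red one being precisely the spokes belonging to the intervals of $K$. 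Since $\bP'_1$ still has positive orientation and $\bP_2$ has negative orientation, the second phase transforms $\bP'_1$ into $\bP_2$ via a strictly monotone exchange sequence as long as both intervals of $\bP'_1$ have length at least two; overall, each edge is used at most twice.

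Mimicking the bookkeeping of Case~2.1 of Lemma~\ref{lem:24}, the total $w_i$-weight of the resulting sequence equals
\[
2\, x_{J(K)}^{(i)} + \sum_{j=1}^{2q} y_j^{(i)},
\]
where $x_j^{(i)}$ and $y_j^{(i)}$ are defined as in the proof of Lemma~\ref{lem:24} but with the weight function $w_i$. Since $\sum_j\bigl(x_j^{(i)} + y_j^{(i)}\bigr) = w_i(E)$, the desired bound is equivalent to $x_{J(K)}^{(i)} \leq \tfrac{1}{2}\sum_j x_j^{(i)}$, which after substitution rearranges to
\[
\sigma_K^{(i)} \coloneqq \sum_{I_j \in K,\, I_j \text{ red}} x_j^{(i)} \;-\!\!\sum_{I_j \in K,\, I_j \text{ blue}} x_j^{(i)} \;\geq\; \tfrac{1}{2}\bigl(x_R^{(i)} - x_B^{(i)}\bigr),
\]
where $x_R^{(i)}$ (resp.\ $x_B^{(i)}$) is the sum of $x_j^{(i)}$ over red (resp.\ blue) intervals. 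So the lemma reduces to finding a single arc $K$ satisfying this inequality for both $i = 1$ and $i = 2$.

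The main obstacle is this two-dimensional arc-selection, which I would attack by a short case analysis exploiting the assumption $2q \geq 6$. Write $V^{(i)} \coloneqq x_R^{(i)} - x_B^{(i)}$. When both $V^{(i)} \geq 0$, taking $K$ to be the complement of any single blue interval $b^*$ gives $\sigma_K^{(i)} = V^{(i)} + x_{b^*}^{(i)} \geq V^{(i)}/2$ for free; the symmetric choice covers the case that both $V^{(i)} \leq 0$. In the mixed-sign cases, as $K$ grows one interval at a time along the cycle, the 2-dimensional vector $(\sigma_K^{(1)}, \sigma_K^{(2)})$ traces a discrete path from $(0,0)$ to $(V^{(1)}, V^{(2)})$ in bounded steps, and the $\Theta(q^2)$ distinct proper arcs available for $2q \geq 6$ are enough to force some arc into the target quadrant by a discrete intermediate-value / pigeonhole argument. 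Finally, the degenerate scenario in which the chosen $K$ produces a $\bP'_1$ with an interval of length one, so Case~1.2 does not directly apply, I would resolve either by perturbing $K$ (adjoining or removing one interval so the short interval becomes long) or, when every interval of $\bP_1$ has length one and hence the wheel has few vertices, by a direct analysis that falls back on Case~1.1 of Lemma~\ref{lem:24}.
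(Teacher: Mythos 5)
Your reduction is sound up to a point: flipping the spokes in $\bigcup_{j\in J(K)}I_j$ (for a nonempty proper arc $K$ of consecutive intervals) does produce a two-interval coloring $\bP'_1$, and the weight bookkeeping correctly reduces the lemma to finding an arc $K$ with $\sigma_K^{(i)}\ge V^{(i)}/2$ for $i=1,2$. This is the same general reduction the paper uses. However, there are two genuine gaps in how you propose to close it.

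First, the paper does not search over all $\Theta(q^2)$ arcs but only over the $q$ (resp.\ $2q$) arcs consisting of exactly $q$ consecutive intervals (i.e., the alternating families $\bigcup_{i=1}^k I_{j+2i-1}\cup\bigcup_{i=k+1}^{2k}I_{j+2i}$ and their cyclic shifts). This restriction is what makes the argument work: summing the inequality $\ineqa_w(j)$ with $\ineqa_w(j+1)$ (and, when $q$ is even, also with $\ineqa_w(j+2k)$) shows it holds for a \emph{strict majority} of shifts $j$, and a strict majority for $w_1$ and a strict majority for $w_2$ must intersect. Your ``discrete intermediate-value / pigeonhole argument'' in the mixed-sign case is not a valid substitute: a discrete path from $(0,0)$ to $(V^{(1)},V^{(2)})$ need not pass through the quadrant $\{\sigma^{(1)}\ge V^{(1)}/2,\ \sigma^{(2)}\ge V^{(2)}/2\}$, and there is no 2D intermediate-value theorem to invoke; you would need a concrete combinatorial argument like the paper's parity/majority count.

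Second, restricting to arcs of length exactly $q$ also dissolves the length-one problem for free, since each of the two new intervals is then a union of $q\ge 3$ original intervals and hence has at least $3>1$ spokes. Your generic arcs do not have this property (e.g., $K$ = complement of a single blue interval of length one), and your proposed fixes are not safe: adjoining or deleting one interval from $K$ changes $\sigma_K^{(i)}$ by $\pm x_j^{(i)}$ and may violate the already-tight inequality, and falling back on Case~1.1 of Lemma~\ref{lem:24} does not help, because that case controls the weight for a \emph{single} $w$ by choosing between two mutually exclusive three-step openings; you have not shown a single opening can be chosen that works for both $w_1$ and $w_2$. In short, the idea of reducing to an arc inequality is right, but the selection of the arc family and the existence argument are where the content of the lemma lies, and your proposal leaves both open.
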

\begin{proof}
We distinguish two cases based on the remainder of the number of intervals modulo four.

\medskip
\noindent \textbf{Case 1.} $q=2k+1$ for some integer $k\geq 1$.

For an index $1\leq j\leq 4k+2$, exchange each spoke $e\in \bigcup_{i=1}^k I_{j+2i-1}$ with its pair $\varphi_\ominus(e)$, and do the same for each spoke $e\in \bigcup_{i=k+1}^{2k} I_{j+2i}$. After these exchanges, the resulting coloring $\bP'_1$ has two intervals: $I_j \cup I_{j+1} \cup \dots \cup I_{j+2k}$ has the same color in $\bP'_1$ as $I_j$ in $\bP_1$, and $I_{j+2k+1} \cup I_{j+2k+2} \cup \dots \cup I_{j+4k+1}$ has the other color. Note that none of these two intervals has length one as $k \ge 1$. Therefore, there exists a strictly monotone exchange sequence from $\bP'_1$ to $\bP_2$ by Case~1.2 of Lemma~\ref{lem:24}. Let $w\in\{w_1,w_2\}$, and let us define $I_i$, $x_i$ and $y_i$ for $1\leq i\leq 2q$ as in the proof of Lemma~\ref{lem:24}, where the $x_i$ and $y_i$ values are computed with respect to $w$. Our goal is to bound the $w$-weight of the above defined sequence of exchanges.

Exchanging each spoke $e$ in $\bigcup_{i=1}^k I_{j+2i-1}\cup  \bigcup_{i=k+1}^{2k} I_{j+2i}$ with its pair $\varphi_\ominus(e)$ has weight
\begin{equation*}
\sum_{i=1}^{k} (x_{j+2i-1} + y_{j+2i-1}) + \sum_{i=k+1}^{2k} (x_{j+2i}+y_{j+2i}). 
\end{equation*}
Then the strictly monotone sequence to $\bP_2$ has weight
\begin{equation*}
\sum_{i=0}^{k} y_{j+2i} + \sum_{i=1}^{k} x_{j+2i-1}  + \sum_{i = k}^{2k} y_{j+2i+1} + \sum_{i=k+1}^{2k} x_{j+2i}.
\end{equation*}
The total weight is then
\begin{equation*}
    2 \cdot \left (\sum_{i=1}^{k} x_{j+2i-1}+ \sum_{i=k+1}^{2k} x_{j+2i}\right)+\sum_{i = 1}^{4k+2}y_i.
\end{equation*}
Therefore the total $w$-weight of the exchange sequence is at most $w(E)=\sum_{i=1}^{4k+2}(x_i + y_i)$ if and only if 
\begin{equation}\label{eq:j}
    \sum_{i=1}^{k} x_{j+2i-1}+ \sum_{i=k+1}^{2k} x_{j+2i}\le \sum_{i=0}^{k} x_{j+2i}+ \sum_{i=k}^{2k} x_{j+2i+1}.\tag{$\ineqa_w(j)$}
\end{equation}

Consider inequalities $\ineqa_w(j)$ and $\ineqa_w(j+1)$. The sum of these two inequalities gives
\begin{equation*}
    \left(\sum_{i=1}^{4k+2} x_i\right)-(x_j+x_{j+2k+1})\le \left(\sum_{i=1}^{4k+2} x_i\right)+(x_j+x_{j+2k+1}).
\end{equation*}
As this inequality clearly holds, at least one of $\ineqa_w(j)$ and $\ineqa_w(j+1)$ must hold as well. Furthermore, $\ineqa_w(j)$ is identical to $\ineqa_w(j+2k+1)$. These together imply that $\ineqa_w(j)$ holds for at least $k+1$ choices of $j$ from $\{1,\dots,2k+1\}$ for $w\in\{w_1,w_2\}$. Therefore, there exists an index $j$ for which both $\ineqa_{w_1}(j)$ and $\ineqa_{w_2}(j)$ are satisfied. As each edge is used at most twice, the statement follows.

\smallskip
\noindent \textbf{Case 2.} $q=2k$ for some integer $k\geq 2$.

Our approach is similar to that of Case 1. For an index $1\leq j\leq 4k$, exchange each spoke $e\in \bigcup_{i=1}^{k-1} I_{j+2i-1}$ with its pair $\varphi_\ominus(e)$, and do the same for each spoke $e\in \bigcup_{i=k}^{2k-1} I_{j+2i}$. After these exchanges, the resulting coloring $\bP'_1$ has two intervals: $I_j \cup I_{j+1} \cup \dots \cup I_{j+2k-1}$ has the same color in $\bP'_1$ as $I_j$ in $\bP_1$, and $I_{j+2k} \cup I_{j+2k+1} \cup \dots \cup I_{j+4k}$ has the other color. Note that none of these two intervals has length one as $k \ge 2$. Therefore, there exists a strictly monotone exchange sequence from $\bP'_1$ to $\bP_2$ by Case~1.2 of Lemma~\ref{lem:24}. Let $w\in\{w_1,w_2\}$, and let us define $I_i$, $x_i$ and $y_i$ for $1\leq i\leq 2q$ as in the proof of Lemma~\ref{lem:24}, where the $x_i$ and $y_i$ values are computed with respect to $w$. Our goal is to bound the $w$-weight of the above defined sequence of exchanges.

Exchanging each spoke $e$ in $\bigcup_{i=1}^{k-1} I_{j+2i-1}\cup  \bigcup_{i=k}^{2k-1} I_{j+2i}$ with its pair $\varphi_\ominus(e)$ has weight
\begin{equation*}
\sum_{i=1}^{k-1} (x_{j+2i-1} + y_{j+2i-1}) + \sum_{i=k}^{2k-1} (x_{j+2i}+y_{j+2i}). 
\end{equation*}
Then the strictly monotone sequence to $\bP_2$ has weight
\begin{equation*}
\sum_{i=0}^{k-1} y_{j+2i} + \sum_{i=1}^{k-1} x_{j+2i-1}  + \sum_{i = k}^{2k} y_{j+2i-1} + \sum_{i=k}^{2k-1} x_{j+2i}.
\end{equation*}
The total weight is then
\begin{equation*}
    2 \cdot \left (\sum_{i=1}^{k-1} x_{j+2i-1}+ \sum_{i=k}^{2k-1} x_{j+2i}\right)+\sum_{i = 1}^{4k}y_i.
\end{equation*}
Therefore the total $w$-weight of the exchange sequence is at most $w(E)=\sum_{i=1}^{4k}(x_i + y_i)$ if and only if 
\begin{equation}\label{eq:jj}
    \sum_{i=1}^{k-1} x_{j+2i-1}+ \sum_{i=k}^{2k-1} x_{j+2i}\le \sum_{i=0}^{k-1} x_{j+2i}+ \sum_{i=k-1}^{2k-1} x_{j+2i+1}.\tag{$\ineqb_w(j)$}
\end{equation}

Consider inequalities $\ineqb_w(j)$ and $\ineqb_w(j+1)$. The sum of these two inequalities gives
\begin{equation*}
    \left(\sum_{i=1}^{4k} x_i\right)-(x_{j}+x_{j+2k-1})\le \left(\sum_{i=1}^{4k} x_i\right)+(x_{j}+x_{j+2k-1}).
\end{equation*}
As this inequality clearly holds, at least one of $\ineqb_w(j)$ and $\ineqb_w(j+1)$ must hold as well. This implies that $\ineqb_w(j)$ holds for at least $2k$ choices of $j$ from $\{1,\dots,4k\}$. Note that if the number of such choices is exactly $2k$, then $\ineqb_w(j)$ holds either for all odd or for all even indices.

Now consider inequalities $\ineqb_w(j)$ and $\ineqb_w(j+2k)$. The sum of these two inequalities gives
\begin{equation*}
    \left(\sum_{i=1}^{4k} x_i\right)-(x_{j+2k-1}+x_{j+4k-1})\le \left(\sum_{i=1}^{4k} x_i\right)+(x_{j+2k-1}+x_{j+4k-1}).
\end{equation*}
As this inequality clearly holds, at least one of $\ineqb_w(j)$ and $\ineqb_w(j+2k)$ must hold as well. As the parities of $j$ and $j+2k$ are the same, this, together with the above observation, implies that $\ineqb_w(j)$ holds for at least $2k+1$ choices of $j$ from $\{1,\dots,4k\}$ for $w\in\{w_1,w_2\}$. Therefore, there exists an index $j$ for which both $\ineqb_{w_1}(j)$ and $\ineqb_{w_2}(j)$ are satisfied. As each edge is used at most twice, the statement follows.
\end{proof}

With the help of Lemmas~\ref{lem:same}, \ref{lem:24} and \ref{lem:more}, we are ready to prove the main result of the paper.

\begin{thm}\label{thm:main1}
Let $\bP_1=(R_1,B_1)$ and $\bP_2=(R_2,B_2)$ be colorings of a wheel $G=(V,E)$, and let $w\colon E\to\mathbb{R}_+$ be a weight function. Then there exists a sequence of exchanges of length at most $n-1$ and total weight at most $w(E)$ that transforms $\bP_1$ into $\bP_2$ and uses each edge at most twice.
\end{thm}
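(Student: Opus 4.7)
The plan is to assemble the three technical lemmas into a single case analysis, splitting first on whether $\bP_1$ and $\bP_2$ share an orientation and then, in the opposing case, on the number of intervals of $\bP_1$. Assume without loss of generality that $\bP_1$ has positive orientation. Since the intervals of a coloring alternate between the two colors, their number is always even and at least two, so $\bP_1$ has either at most four or at least six intervals.

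Case 1: $\bP_2$ also has positive orientation. By Lemma~\ref{lem:same} there exists a strictly monotone exchange sequence from $\bP_1$ to $\bP_2$. By definition such a sequence uses only edges of $(R_1\cap B_2)\cup(B_1\cap R_2)$, each at most once. Its length equals $|R_1\setminus R_2|=(n-1)-|R_1\cap R_2|\leq n-1$ and its total weight equals $w((R_1\cap B_2)\cup(B_1\cap R_2))\leq w(E)$, which meets all three required bounds.

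Case 2: $\bP_2$ has negative orientation. If $\bP_1$ has at most four intervals then Lemma~\ref{lem:24} applies verbatim. If $\bP_1$ has at least six intervals, then the plan is to invoke Lemma~\ref{lem:more} with $w_1\coloneqq w$ and with $w_2$ the constant-$1$ function on $E$. A wheel on $n$ vertices has $|E|=2(n-1)$ and every symmetric exchange contributes exactly $2$ to the $w_2$-weight of the sequence, so the guarantee $w_2\text{-weight}\leq w_2(E)=2(n-1)$ translates directly into the length bound $n-1$; simultaneously the $w_1$-bound of Lemma~\ref{lem:more} gives the required weight bound. In all three cases each edge is used at most twice.

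The main obstacle is not really in this proof but in the three lemmas themselves; here the only mildly non-obvious step is realizing why Lemma~\ref{lem:more} was stated for \emph{two} simultaneous weight functions rather than one, namely, so that the length constraint of Theorem~\ref{thm:main1} can be encoded as a weight constraint under the all-ones weight and played off against the true weight $w$ in the interval-rich case that Lemma~\ref{lem:24} does not cover.
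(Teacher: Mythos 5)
Your proof is correct and follows the paper's argument essentially verbatim: split on orientation, use Lemma~\ref{lem:same} for the same-orientation case, Lemma~\ref{lem:24} for the different-orientation case with at most four intervals, and Lemma~\ref{lem:more} with $w_1=w$, $w_2\equiv 1$ otherwise, converting the $w_2$-bound into the length bound via $|E|=2(n-1)$. The closing remark about why Lemma~\ref{lem:more} needs two weight functions is a nice observation, but it is exactly the mechanism the paper uses.
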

\begin{proof}
If the colorings have identical orientation, then the theorem follows by Lemma~\ref{lem:same}. Indeed, the length and the weight of any strictly monotone sequence of exchanges that transforms $\bP_1$ into $\bP_2$ achieves the natural lower bounds $(n-1)-|R_1\cap R_2|\leq n-1$ and $w(R_1\triangle R_2)\leq w(E)$, respectively, and uses each edge at most once.

Hence assume that the colorings have different orientations. If $\bP_1$ has at most four intervals, then the theorem immediately follows by Lemma~\ref{lem:24}. Otherwise, Lemma~\ref{lem:more} with the choice $w_1\coloneqq w$ and $w_2\equiv 1$ ensures the existence of a sequence of exchanges of total weight at most $w_1(E)=w(E)$ and length at most $w_2(E)/2=|E|/2=n-1$ that uses each edge at most twice, concluding the proof.
\end{proof}

\section{Spikes}
\label{sec:spikes}

In this section, we prove that a strengthening of Conjecture~\ref{conj:weighted} analogous to Theorem~\ref{thm:main1} holds for spikes as well. That is, consider a rank-$r$ spike $M$ over ground set $S$, and let $w\colon S\to\mathbb{R}_+$ be a weight function. We show that for any two compatible basis pairs $\bP_1=(R_1,B_1)$ and $\bP_2 =(R_2, B_2)$, there exists a sequence of exchanges of length at most $r$ and total weight at most $w(S)$ that transforms $\bP_1$ into $\bP_2$ and uses each element at most twice.

Recall that $S=\{t,x_1,y_1,\dots,x_r,y_r\}$, where $t$ is the tip and $\{x_i,y_i\}$ for $1\leq i\leq r$ are the legs of the spike. Hence $R_1\cup B_1=R_2\cup B_2$ does not contain exactly one element $s$ of $S$; for short, we say that the pairs $\bP_1$ and $\bP_2$ \textbf{miss} the element $s$. We distinguish two cases depending on whether this element is the tip of $M$ or not.

\begin{lem} \label{lem:spiket}
Let $\bP_1=(R_1,B_1)$ and $\bP_2=(R_2,B_2)$ be compatible pairs of disjoint bases of a rank-$r$ spike $M$ over a ground set $S$ missing the tip $t$, and let $w\colon S\to\mathbb{R}_+$ be a weight function. Then there exists a sequence of exchanges of length at most $r$ and total weight at most $w(S-t)$ that transforms $\bP_1$ into $\bP_2$ and uses each element at most twice.
\end{lem}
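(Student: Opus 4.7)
My plan is to exploit the very restricted structure of bases contained in $S-t$. A first structural observation: any basis $R\subseteq S-t$ has $|R|=r$, and the circuits in $\cC_2$ forbid $R$ from fully containing two distinct legs. A size count then shows that either
\textbf{(A)} $R$ meets every leg in exactly one element (and thus $R\notin\cC_3$), or
\textbf{(B)} $R$ fully contains exactly one leg $i$, is disjoint from exactly one other leg $k$, and meets every other leg in one element.
Crucially, every Type B set is \emph{automatically} a basis: it cannot contain any $\cC_3$ or $\cC_4$ circuit because it avoids a whole leg (and is too small for $\cC_4$), and the leg-size pattern rules out $\cC_2$ circuits. Since the ``full leg'' in $R$ becomes the ``empty leg'' in $B=(S-t)\setminus R$, both $R$ and $B$ always have the same type.

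The proof then branches on the types of $\bP_1$ and $\bP_2$. When both are \textbf{Type B}, I would use at most two \emph{structural} exchanges to shift the full leg and the empty leg of $\bP_1$ onto those of $\bP_2$: one exchange of the form (full-leg element $\leftrightarrow$ split-leg element in $B$) moves the full leg, and an analogous exchange moves the empty leg. Each such exchange preserves Type B, hence is automatically feasible. Once the full/empty legs coincide, every remaining disagreement lies on a split leg, and each can be repaired by an independent within-leg swap, which again preserves Type B. Careful choice of which element of each leg is removed ensures every element is used at most once, so length is at most $r$ and weight at most $w(S-t)$.

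When both pairs are \textbf{Type A}, I would first attempt a strictly monotone sequence of within-leg swaps. If each differing leg admits such a feasible swap (the only possible obstruction is landing in $\cC_3$), the sequence has length at most $r$, uses each element once, and has weight at most $w(R_1\triangle R_2)\le w(S-t)$. Otherwise every candidate single-leg swap creates a $\cC_3$ circuit, and I would use a cross-leg exchange to produce a Type B intermediate pair $\bP'_1$; this is always feasible because the resulting pair is Type B. From $\bP'_1$ I invoke the Type B case, finishing with a final exchange that converts back to a Type A pair matching $\bP_2$. The \textbf{mixed} case (different types) is handled by a single type-conversion cross-leg exchange reducing to one of the same-type cases.

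The main technical obstacle will be bounding the length and, especially, the weight simultaneously in the rigid Type A sub-case where a detour through Type B is required. For the length, the detour adds a bounded number of extra exchanges that I expect to compensate by strictly reducing the remaining disagreements. For the weight, the ``at most twice'' slack becomes important: some element in the detour may have to be reused. I plan to argue that several different detour exchanges are available (corresponding to different choices of the pair of legs used to create the Type B intermediate), and to pick one whose doubled element has sufficiently small weight so that the cumulative weight stays at most $w(S-t)$. This is analogous to the ``choose a light repeated element'' argument used for split matroids (Theorem~\ref{thm:split}) and is where I expect the proof to require the most care.
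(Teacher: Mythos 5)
Your Type A / Type B classification of bases in $S-t$ is exactly the transversal / non-transversal dichotomy used in the paper, and your treatment of several cases matches the paper's in outline. But there is a concrete gap in the Type B / Type B case, and you have misplaced the difficulty.

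You assert that in the Type B / Type B case ``each such exchange preserves Type B, hence is automatically feasible.'' This fails precisely when the full leg of $R_2$ equals the empty leg of $R_1$ and vice versa, say $R_1\supseteq\{x_1,y_1\}$, $R_1\cap\{x_r,y_r\}=\emptyset$, $R_2\supseteq\{x_r,y_r\}$, $R_2\cap\{x_1,y_1\}=\emptyset$. To ``move the full leg'' to position $r$ you must exchange an element of leg~$1$ with an element of leg~$r$; but leg~$r$ is entirely blue in $\bP_1$, so after the exchange the red set meets every leg exactly once. That intermediate set is Type~A, not Type~B, and may well be a $\cC_3$ circuit, making the exchange infeasible. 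This is the case the paper isolates as its Case~1.4, and it is the only place where the weight slack and the ``use some element twice'' freedom are actually needed: if some split leg differs between $\bP_1$ and $\bP_2$ one can still build a strictly monotone sequence by routing through that leg, but if all split legs agree (so $R_1\triangle R_2=\{x_1,y_1,x_r,y_r\}$) then every strictly monotone 2-step sequence passes through a transversal intermediate, and one must insert an extra exchange through a split leg, reusing a carefully chosen light element (the paper picks the lighter of $x_2,y_2$). Your plan does not provide a feasible construction for this subcase.

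Meanwhile, you place the hard weight/length argument in the Type~A / Type~A case, but that case is in fact easy and strictly monotone: if $R_1,R_2$ differ in at least two legs, one cross-leg exchange turns $\bP_1$ into a Type~B pair, then within-leg swaps are all feasible (Type~B is automatic), then a final cross-leg exchange restores Type~A; this uses every element at most once. So no weight slack is needed there. Your ``choose a light repeated element'' instinct is right and is indeed what the paper uses, but you should redirect it to the symmetric-full/empty-leg subcase of Type~B / Type~B rather than to Type~A / Type~A.
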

\begin{proof}
Let $Z\subseteq S-t$ be a subset of size $r$. Then $Z$ is a basis of $M$ if and only if one of the followings hold.
\begin{enumerate}\itemsep0em
    \item There exist indices $k$ and $\ell$ such that $\{x_k, y_k\} \subseteq Z$, $\{x_\ell, y_\ell \} \cap Z = \emptyset$ and $|\{x_i,y_i\} \cap Z| = 1$ for all $1 \le i \le r$, $i \not \in \{k,\ell\}$.
    \item $|Z \cap \{x_i,y_i\}| = 1$ for all $1 \le i \le r$ and $Z$ is not a circuit of $M$.
\end{enumerate}
Bases of the latter type are called \textbf{transversal bases} as they intersect every leg of $M$, while bases of the former type are called \textbf{non-transversal bases}.

\medskip
\noindent \textbf{Case 1.} Both $R_1$ and $R_2$ are non-transversal.

We may assume that both $x_1$ and $y_1$ are red and both $x_r$ and $y_r$ are blue in $\bP_1$. Note that in this case exchanging the elements $x_i$ and $y_i$ is feasible for $2 \le i \le r-1$. As $R_2$ is non-transversal, there exists indices $1\leq k,\ell\leq r$ such that $x_k$ and $y_k$ are both red and $x_\ell$ and $y_\ell$ are both blue in $\bP_2$. We distinguish cases based on the values of $k$ and $\ell$.

\begin{figure}[t!]
\centering
\begin{subfigure}[t]{0.47\textwidth}
\centering
\def\svgwidth{0.85\linewidth} 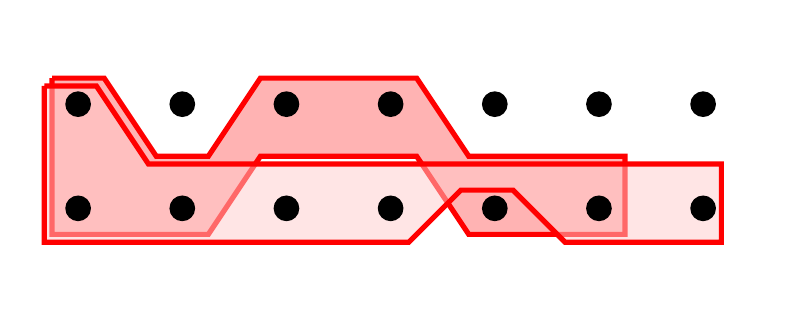
\caption{Case $k=1$.}
\label{fig:spike11}
\end{subfigure}\hfill
\begin{subfigure}[t]{0.47\textwidth}
\centering
\def\svgwidth{0.85\linewidth} 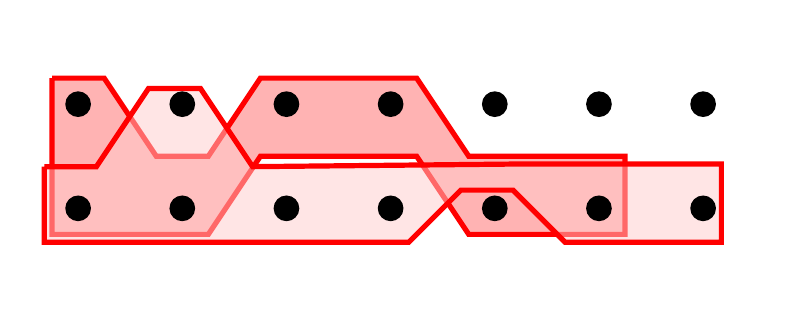
\caption{Case $2\leq k\leq r-1$.}
\label{fig:spike12}
\end{subfigure}\\[5pt]
\begin{subfigure}[t]{0.47\textwidth}
\centering
\def\svgwidth{0.85\linewidth} 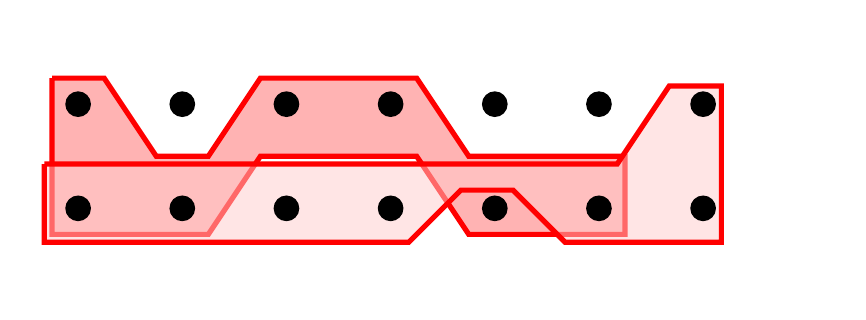
\caption{Case $k=r$ and $2\leq\ell\leq r-1$.}
\label{fig:spike13}
\end{subfigure}\hfill
\begin{subfigure}[t]{0.47\textwidth}
\centering
\def\svgwidth{0.85\linewidth} 
\begingroup%
  \makeatletter%
  \providecommand\color[2][]{%
    \errmessage{(Inkscape) Color is used for the text in Inkscape, but the package 'color.sty' is not loaded}%
    \renewcommand\color[2][]{}%
  }%
  \providecommand\transparent[1]{%
    \errmessage{(Inkscape) Transparency is used (non-zero) for the text in Inkscape, but the package 'transparent.sty' is not loaded}%
    \renewcommand\transparent[1]{}%
  }%
  \providecommand\rotatebox[2]{#2}%
  \newcommand*\fsize{\dimexpr\f@size pt\relax}%
  \newcommand*\lineheight[1]{\fontsize{\fsize}{#1\fsize}\selectfont}%
  \ifx\svgwidth\undefined%
    \setlength{\unitlength}{249.31054688bp}%
    \ifx\svgscale\undefined%
      \relax%
    \else%
      \setlength{\unitlength}{\unitlength * \real{\svgscale}}%
    \fi%
  \else%
    \setlength{\unitlength}{\svgwidth}%
  \fi%
  \global\let\svgwidth\undefined%
  \global\let\svgscale\undefined%
  \makeatother%
  \begin{picture}(1,0.36950927)%
    \lineheight{1}%
    \setlength\tabcolsep{0pt}%
    \put(0,0){\includegraphics[width=\unitlength,page=1]{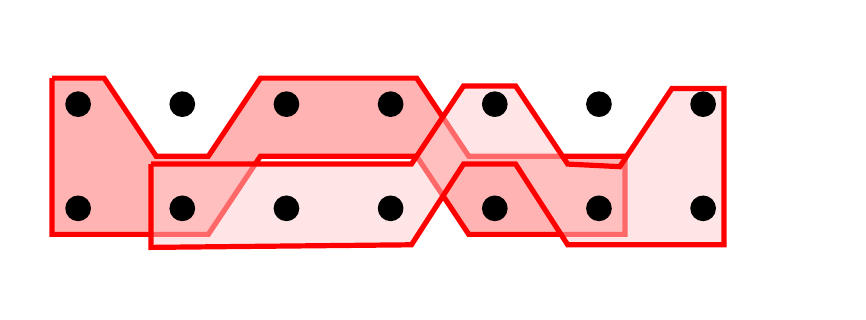}}%
    \put(-0.01,0.30934335){\makebox(0,0)[lt]{\lineheight{1.25}\smash{\begin{tabular}[t]{l}$x_1=x_\ell$\end{tabular}}}}%
    \put(0.72,0.30934335){\makebox(0,0)[lt]{\lineheight{1.25}\smash{\begin{tabular}[t]{l}$x_r=x_k$\end{tabular}}}}%
    \put(0.07,0.03859668){\makebox(0,0)[lt]{\lineheight{1.25}\smash{\begin{tabular}[t]{l}$y_1$\end{tabular}}}}%
    \put(0.55,0.03859668){\makebox(0,0)[lt]{\lineheight{1.25}\smash{\begin{tabular}[t]{l}$y_\ell$\end{tabular}}}}%
    \put(0.79,0.03859668){\makebox(0,0)[lt]{\lineheight{1.25}\smash{\begin{tabular}[t]{l}$y_r$\end{tabular}}}}%
    \put(0.36099556,0.30934335){\makebox(0,0)[lt]{\lineheight{1.25}\smash{\begin{tabular}[t]{l}$R_1$\end{tabular}}}}%
    \put(0.36099556,0.00851371){\makebox(0,0)[lt]{\lineheight{1.25}\smash{\begin{tabular}[t]{l}$R_2$\end{tabular}}}}%
  \end{picture}%
\endgroup%

\caption{Case $k=r$ and $\ell=1$.}
\label{fig:spike14}
\end{subfigure}
\caption{Illustration of Case 1 of Lemma~\ref{lem:spiket}.}
\label{fig:spike1}
\end{figure}

\smallskip
\noindent \textbf{Case 1.1.} $k=1$.

Exchange $x_i$ and $y_i$ for all $2 \le i \le r-1$ for which $i \ne \ell$ and the color of $x_i$ differs in $\bP_1$ and $\bP_2$. If $\ell \ne r$, then exchange the unique element of $\{x_\ell, y_\ell\} \cap R_1$ with the unique element of $\{x_r, y_r\} \cap R_2$. 

\smallskip
\noindent \textbf{Case 1.2.} $2 \le k \le r-1$.

We may assume that $x_1$ and $x_r$ are blue in $\bP_2$ and $x_k$ is red in $\bP_1$. Exchange $x_1$ with $y_k$, then exchange $x_i$ and $y_i$ for all $2 \le i \le r-1$ for which $i \not \in \{k, \ell\}$ and the color of $x_i$ differs in $\bP_1$ and $\bP_2$. Moreover, if $\ell \ne r$ then exchange the unique element of $\{x_\ell, y_\ell\} \cap R_1$ with $y_r$. 

\smallskip
\noindent \textbf{Case 1.3.} $k=r$ and $2 \le \ell \le r-1$.

Exchange the unique element of $\{x_\ell, y_\ell\} \cap R_1$ with $x_r$. Then exchange $x_i$ and $y_i$ for all $2 \le i \le r-1$ for which $i \ne \ell$ and the color of $x_i$ differs in $\bP_1$ and $\bP_2$. Finally, exchange the unique element of $\{x_1, y_1\} \cap B_2$ with $y_r$. 

\smallskip
\noindent \textbf{Case 1.4.} $k=r$ and $\ell=1$.

Suppose first that there exists an index $2 \le j \le r-1$ such that $x_j$ has different colors in $\bP_1$ and $\bP_2$. We may assume that $x_j$ is red in $\bP_1$.  Exchange $x_1$ with $y_j$, then exchange $x_i$ and $y_i$ for all $2 \le i \le r-1$ for which $i \ne j$ and $x_i$ has different colors in $\bP_1$ and $\bP_2$. Finally, exchange $y_1$ and $x_r$, and exchange $x_j$ with $y_r$. This results in a strictly monotone exchange sequence that transforms $\bP_1$ into $\bP_2$.  

The only remaining case is when $x_i$ has the same color in $\bP_1$ and $\bP_2$ for all $2\le i \le r-1$, we may assume that they are all red.  If $r=2$ then at least one of the exchanges $(x_1,x_2)$, $(x_1,y_2)$ is feasible, exchanging them and exchanging the other two elements afterwards gives a strictly monotone exchange sequence.
If $r \ge 3$ then we may assume that $w(y_2) \le w(x_2)$. Consider the exchange sequence $(x_1, y_2)$, $(y_1, x_r)$, $(y_2, y_r)$ of length $3 \le r$. The only element that is used twice is $y_2$, while $x_2$ was not used at all, hence the sequence has total weight at most $w(E)$ by $w(y_2) \leq w(x_2)$.

\medskip
\noindent \textbf{Case 2.} Both $R_1$ and $R_2$ are transversal bases.

If $R_1$ and $R_2$ differ in only one leg, say $R_1\cap \{x_i,y_i\}\neq R_2\cap\{x_i,y_i\}$, then exchanging $x_i$ and $y_i$ is feasible. Otherwise we may assume that $x_1,x_2 \in R_1$ and $y_1,y_2 \in R_2$. Exchange $x_1$ and $y_2$, then exchange $x_i$ and $y_i$ for all $3 \le i \le r$ for which the color of $x_i$ differs in $\bP_1$ and $\bP_2$. Finally, exchange $x_2$ and $y_1$.

\medskip
\noindent \textbf{Case 3.} Exactly one of $R_1$ and $R_2$ is a transversal basis. 

We may assume that $R_1$ is non-transversal such that both $x_1$ and $y_1$ are red and both $x_r$ and $y_r$ are blue in $\bP_1$. Exchange $x_i$ and $y_i$ for all $2 \le i \le r-1$ for which the color of $x_i$ differs in $\bP_1$ and $\bP_2$. Then exchange the unique element of $\{x_1,y_1\} \cap B_2$ with the unique element of $\{x_r,y_r\} \cap R_2$. 

\medskip
Concluding the above, in all cases except Case 1.4, the resulting sequence of exchanges is strictly monotone and uses each element at most twice, hence the bounds on the length and the total weight follow. In Case 1.4, we showed that the bounds hold, concluding the proof of the lemma. 
\end{proof}

Now we consider the case when $\bP_1$ and $\bP_2$ miss a non-tip element, say, $x_1$.

\begin{lem} \label{lem:spikex}
Let $\bP_1=(R_1,B_1)$ and $\bP_2=(R_2,B_2)$ be compatible pairs of disjoint bases of a rank-$r$ spike $M$ over a ground set $S$ missing the non-tip element $x_1$, and let $w\colon S\to\mathbb{R}_+$ be a weight function. Then there exists a sequence of exchanges of length at most $r$ and total weight at most $w(S-x_1)$ that transforms $\bP_1$ into $\bP_2$ and uses each element at most twice.
\end{lem}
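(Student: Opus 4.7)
My plan is to mirror the case analysis of Lemma~\ref{lem:spiket}, with additional subcases reflecting the distinguished role of $y_1$ once $x_1$ is missing from the ground set.

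First I would characterize the pairs $(R,B)$ of disjoint bases of $M$ with $R \cup B = S - x_1$. Since $t$ appears in exactly one of $R, B$, after swapping red and blue in both pairs I may assume $t \in R_1$. Inspecting the circuits of $M$ that lie in $S - x_1$---namely $\{t, x_i, y_i\}$ for $i \ge 2$, the four-element sets $\{x_i, y_i, x_j, y_j\}$ with $2 \le i < j$, and those transversal circuits in $\cC_3$ that contain $y_1$ (but, of course, not $x_1$)---I would show that each such pair is of exactly one of two types. In \emph{Type I}, the elements $t$ and $y_1$ lie on the same side; the other side then contains a full leg $\{x_{i_0}, y_{i_0}\}$ for some $i_0 \ge 2$, while the remaining elements split the legs $\{x_i, y_i\}$ with $i \in \{2, \dots, r\} \setminus \{i_0\}$ one-to-one between the two sides. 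In \emph{Type II}, $t$ and $y_1$ lie on opposite sides; both sides, after removing $t$ or $y_1$, are complementary transversals of the legs $\{x_2, y_2\}, \dots, \{x_r, y_r\}$.

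I would then proceed by case analysis on the ordered pair of types $(\bP_1, \bP_2)$. The elementary exchange moves at my disposal are: the within-leg swaps $(x_i, y_i)$ for $i \ge 2$, feasible whenever the two resulting sets avoid $\cC_3$; swaps involving $y_1$ and an element of some leg $\{x_j, y_j\}$, needed to move $y_1$ between sides; and a swap involving an element of the full leg of a Type I pair when the special index $i_0$ differs between $\bP_1$ and $\bP_2$. In each configuration, the strategy is to first adjust the special leg if needed, then process the legs $\{x_i, y_i\}$ on which $\bP_1$ and $\bP_2$ disagree via a single $(x_i, y_i)$ swap each, and finally relocate $y_1$ if required. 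In most subcases this yields a strictly monotone sequence of length at most $r - |R_1 \cap R_2| \le r$ and weight $w(R_1 \triangle R_2) \le w(S - x_1)$, which is exactly what the lemma asserts.

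The hard part, as in Case~1.4 of Lemma~\ref{lem:spiket}, is a handful of Type~I to Type~I configurations with $i_0^{(1)} \ne i_0^{(2)}$ in which every candidate first monotone exchange freezes the relative positions of $t$ and $y_1$, obstructing a strictly monotone sequence. For these I plan to use a length-three ``reversing'' subroutine modeled on the subsequence $(x_1, y_j), (y_1, x_r), (y_j, y_r)$ of Lemma~\ref{lem:spiket}, which uses one auxiliary element twice and leaves the remainder strictly monotone with respect to $\bP_2$. Two symmetric versions of this subroutine are available, differing only in which element gets used twice; choosing the one whose repeated element has smaller weight gives, by an averaging argument in the spirit of the proofs of Theorem~\ref{thm:split} and Lemma~\ref{lem:spiket}, a total weight at most $w(S - x_1)$. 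The length bound is preserved because the three-step subroutine replaces what would otherwise be two strictly monotone steps, so the overall length remains at most $r$.
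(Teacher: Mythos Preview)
Your Type~I/Type~II classification matches the paper's, and your treatment of the Type~I--to--Type~I obstruction (the analogue of Case~1.3 in the paper) is on the right track. However, there is a genuine gap: you have missed a second hard case, and the reason is that your ``process the legs via $(x_i,y_i)$ swaps'' step is not always feasible from a Type~II pair.

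Concretely, suppose $\bP_1$ is Type~II with $y_1\in R_1$ and $t\in B_1$. Then $R_1-y_1$ is a transversal of the legs $\{x_2,y_2\},\dots,\{x_r,y_r\}$, and after the swap $(x_i,y_i)$ the new red set is still of the form $\{y_1\}\cup(\text{transversal})$, which may lie in $\cC_3$ and hence fail to be a basis. So the within-leg swaps you rely on can be blocked whenever you are in a Type~II configuration. Your plan ``adjust special leg, process legs, then relocate $y_1$'' therefore does not work when you start in Type~II; the order must be reversed (move $y_1$ or $t$ first to reach Type~I, then do the leg swaps).

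This exposes the second non-monotone case you overlooked: both $\bP_1$ and $\bP_2$ are Type~II with $t$ and $y_1$ keeping the \emph{same} colors (the paper's Case~2.2). Here neither $t$ nor $y_1$ needs to move, so relocating one of them to reach a Type~I intermediate state and then moving it back costs two uses of that element, and no strictly monotone sequence exists in general --- the Remark following Theorem~\ref{thm:main2} exhibits $2^{r-2}$ such colorings with no strictly monotone path between any two. The paper's remedy is exactly the pivot trick you describe for the Type~I case, but applied to $\{t,y_1\}$: exchange $y_1$ (or $t$) with an element of a disagreeing leg, perform all leg swaps in the resulting Type~I state, then exchange $y_1$ (or $t$) back; choose whichever of $y_1,t$ has smaller weight to be the element used twice. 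You should add this case to your plan.
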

\begin{proof}
Let $Z\subseteq S-x_1$ be a subset of size $r$. Then $Z$ is a basis of $M$ if and only if one of the followings hold.
\begin{enumerate}\itemsep0em
    \item $t \in Z$, $y_1 \in Z$, $\{x_\ell, y_\ell\} \cap Z = \emptyset$ for an index $2 \le \ell \le r$, $|\{x_i,y_i\} \cap Z| = 1$ for all $i \in \{2,\dots, r\}$, $i\neq\ell$.
    \item $t \in Z$, $y_1 \not \in Z$, $|\{x_i,y_i\} \cap Z| = 1$ for all $2 \le i \le r$.
    \item $t \not \in Z$, $y_1 \in Z$, $|\{x_i,y_i\} \cap Z| = 1$ for all $2 \le i \le r$, $Z$ is not a circuit.
    \item $t \not \in Z$, $y_1 \not \in Z$, $\{x_\ell, y_\ell\} \subseteq Z$ for an index $2 \le \ell \le r$, $|Z \cap \{x_i,y_i\}| = 1$ for all $i \in \{2,\dots, r\}$, $i \ne \ell$.
\end{enumerate}

We will distinguish two cases based on the coloring of $t$ and $y_1$.

\medskip
\noindent \textbf{Case 1.} $t$ and $y_1$ have the same color in at least one of $\bP_1$ and $\bP_2$.

We may assume that $t$ and $y_1$ are both red, and $x_r$ and $y_r$ are both blue in $\bP_1$. Exchange $x_i$ and $y_i$ for all $2 \le i \le r-1$ for which $|R_2 \cap \{x_i,y_i\}| = |B_2 \cap \{x_i, y_i\}| = 1$ and $x_i$ have different colors in $\bP_1$ and $\bP_2$. Let $\bP'_1=(R'_1, B'_1)$ denote the coloring obtained this way. Note that all elements have the same color in $\bP'_1$ and $\bP_2$, except possibly $t, y_1, x_r, y_r$ and at most one leg $\{x_\ell, y_\ell\}$.
We distinguish further cases based on the colors of $t$ and $y_1$ in $\bP_2$. 

\smallskip
\noindent \textbf{Case 1.1.} $t$ and $y_1$ are both red in $\bP_2$.

Let $\ell$ denote the unique index for which both $x_\ell$ and $y_\ell$ are blue in $\bP_2$. If $\ell = r$, we need no further exchanges. If $\ell \ne r$, exchange the element of the leg $\{x_\ell, y_\ell\}$ which is red in $\bP_1$ with the element of leg $\{x_r, y_r\}$ which is red in $\bP_2$. 

\smallskip
\noindent \textbf{Case 1.2.} Exactly one of $t$ and $y_1$ is red in $\bP_2$. 
In this case exactly one of $t$ and $y_1$ and exactly one of $x_r$ and $y_r$ need to change colors and their exchange is feasible. 

\smallskip
\noindent \textbf{Case 1.3} Both $t$ and $y_1$ are blue in $\bP_2$.

Let $\ell$ denote the unique index for which $x_\ell$ and $y_\ell$ are both red in $\bP_2$. 
If $\ell = r$, then consider the exchange sequences $(y_1, x_r)$, $(t, y_r)$ and $(y_1, y_r)$, $(t, x_r)$ of length two. At least one of them is feasible since at most one of $B'_1-x_r+y_1$ and $B'_1-y_r+y_1$ forms a circuit by $|(B'_1-x_r+y_1) \cap (B'_1-y_r+y_1)| = r-1$. Adding this to the exchange sequence that transformed $\bP_1$ into $\bP'_1$ we get a strictly monotone exchange sequence from $\bP_1$ to $\bP_2$. 

If $2 \le \ell \le r-1$, then we may assume that $x_\ell$ is red and $y_\ell$ is blue in $\bP_1$, while $x_r$ is red and $y_r$ is blue in $\bP_2$. If the exchange $(y_1, x_r)$ is feasible, then adding $(y_1, x_r)$, $(t, y_\ell)$ to the exchange sequence that transformed $\bP_1$ into $\bP'_1$ results in a strictly monotone exchange sequence. From now on we assume that $(y_1, x_r)$ is not feasible which means that $B'_1-x_r+y_1$ forms a circuit. As the intersection of two circuits cannot have size $r-1$, this implies that both $B'_1-y_r+y_1$ and $B'_1-\{x_r, y_\ell\}+\{x_\ell,y_1\}$ are bases.

If $w(x_\ell) \ge w(y_r)$, then extend the exchange sequence from $\bP_1$ to $\bP'_1$ by the exchanges $(y_1, y_r)$, $(y_\ell, t)$, $(x_r, y_r)$. The feasibility of these exchanges follows from $B'_1-y_r+y_1$ being a basis. The total length of the exchange sequence is at most $(r-3)+3 = r$. The sequence does not use $x_\ell$, it uses $y_r$ twice, and all the other elements at most once, so it has weight at most $w(E)$ by the assumption $w(x_\ell) \ge w(y_r)$.

If $w(x_\ell) < w(y_r)$, then extend the exchange sequence from $\bP_1$ to $\bP'_1$ by the exchanges $(x_\ell, x_r)$, $(y_1, y_\ell)$, $(x_\ell, t)$. The feasibility of these exchanges follows from $B'_1-\{x_r, y_\ell\}+\{x_\ell,y_1\}$ being a basis. The total length of the exchange sequence is at most $(r-3)+3=r$. The sequence does not use $y_r$, it uses $x_\ell$ twice, and all the other elements at most once, so it has weight at most $w(E)$ by the assumption $w(x_\ell) < w(y_r)$.

\medskip
\noindent \textbf{Case 2.} $t$ and $y_1$ have different colors in both $\bP_1$ and $\bP_2$.

We may assume that $y_1$ is red and $t$ is blue in $\bP_1$. Observe that each leg contains one red and one blue element in any of the colorings $\bP_1$ and $\bP_2$. If every leg is colored the same way in $\bP_1$ and $\bP_2$, then the exchange distance of $\bP_1$ and $\bP_2$ is at most one. Hence we may assume that $x_2 \in B_1 \cap R_2$ and $y_2 \in B_2 \cap R_1$.

\smallskip
\noindent \textbf{Case 2.1.} $y_1$ is blue and $t$ is red in $\bP_2$.

Exchange $x_2$ with $y_1$. After this step, both $y_1$ and $t$ is blue, hence we can exchange $x_i$ and $y_i$ for all $3 \le i \le r$ for which $x_i$ has different colors in $\bP_1$ and $\bP_2$. Finally, exchange $y_2$ with $t$.

\smallskip
\noindent \textbf{Case 2.2.} $y_1$ is red and $t$ is blue in $\bP_2$.

If $w(y_1) \leq w(t)$, exchange $y_1$ with $x_2$. After this step, exchange $x_i$ and $y_i$ for all $3 \le i \le r$ for which $x_i$ has different colors in $\bP_1$ and $\bP_2$. Finally, exchange $y_1$ with $y_2$. This exchange sequence does not use $t$, uses $y_1$ twice and all the other elements at most once. These imply that its length is at most $r$ and total weight is at most $w(E)$ by $w(y_1) \le w(t)$.

If $w(y_1) > w(t)$, exchange $t$ with $y_2$. After this step, exchange $x_i$ and $y_i$ for all $3 \le i \le r$ for which $x_i$ has different colors in $\bP_1$ and $\bP_2$. Finally, exchange $t$ with $x_2$. This exchange sequence does not use $y_1$, uses $t$ twice and all the other elements at most once. These imply that its length is at most $r$ and total weight is at most $w(E)$ by $w(y_1) > w(t)$.

\medskip
Concluding the above, in all cases except Cases 1.3 and 2.2, the resulting sequence of exchanges is strictly monotone and uses each element at most twice, hence the bounds on the length and the total weight follow. In Cases 1.3 and 2.2, we showed that the bounds hold, concluding the proof of the lemma.
\end{proof}

\begin{thm} \label{thm:main2}
Let $\bP_1 = (R_1,B_1)$ and $\bP_2 = (R_2,B_2)$ be compatible pairs of disjoint bases of a rank-$r$ spike $M$ over a ground set $S$, and let $w\colon  S\to \mathbb{R}_+$ be a weight function. Then there exists a sequence of exchanges of length at most $r$ and total weight at most $w(S)$ that transforms $\bP_1$ into $\bP_2$ and uses each element at most twice.
\end{thm}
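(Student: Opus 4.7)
The plan is to reduce Theorem~\ref{thm:main2} directly to Lemmas~\ref{lem:spiket} and~\ref{lem:spikex} via a simple case split on the single element of $S$ that the basis pairs miss. Since $R_1$ and $B_1$ are disjoint bases of a rank-$r$ matroid with $|S|=2r+1$, we have $|R_1\cup B_1|=2r$, so there is a unique element $s\in S$ with $s\notin R_1\cup B_1$. By compatibility, $R_2\cup B_2=R_1\cup B_1=S-s$, so $\bP_1$ and $\bP_2$ miss the same element $s$.

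If $s=t$ is the tip of $M$, then Lemma~\ref{lem:spiket} already produces an exchange sequence of length at most $r$, total weight at most $w(S-t)\le w(S)$, using each element at most twice. If instead $s\in\{x_i,y_i\}$ for some $1\le i\le r$, then I would exploit the symmetry of the spike construction: the circuit family $\cC=\cC_1\cup\cC_2\cup\cC_3\cup\cC_4$ is invariant under any permutation of the legs $\{x_j,y_j\}$, and (if $\cC_3$ is either empty or already symmetric between $x_i$ and $y_i$ for the relevant index) also under swapping $x_i$ and $y_i$ within a single leg. One should check this symmetry precisely, but once it is in hand, one may relabel so that $s=x_1$, and then apply Lemma~\ref{lem:spikex}, again giving the required bounds since $w(S-x_1)\le w(S)$ by nonnegativity of $w$.

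The only point that requires any care is the reduction to $s=x_1$. The definition of $\cC_3$ as an arbitrary subset of transversal $r$-subsets means that, a priori, $\cC_3$ need not be symmetric under permutations of legs. However, the statement of Lemma~\ref{lem:spikex} was phrased for a spike in which the missed element is called $x_1$, and nothing in its proof used any asymmetry between $x_1$ and $y_1$ beyond the names; one can apply the same lemma to any spike obtained from $M$ by renaming its elements so that the missed element becomes $x_1$. Since any permutation of the legs, together with any swap of the two elements in each leg, carries $M$ to another spike, this relabeling is legitimate and we may invoke Lemma~\ref{lem:spikex} in that relabeled spike to obtain the required exchange sequence in the original $M$.

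The proof is therefore just a one-line case split followed by citation of the two lemmas, with the weight bound $w(S-s)\le w(S)$ being immediate from $w\ge 0$. The substantive content of the theorem already lives in Lemmas~\ref{lem:spiket} and~\ref{lem:spikex}; there is no real obstacle left at this stage beyond articulating the symmetry that justifies relabeling the missed non-tip element as $x_1$.
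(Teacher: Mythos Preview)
Your proposal is correct and matches the paper's own proof, which is literally the one-line ``The theorem follows by combining Lemmas~\ref{lem:spiket} and~\ref{lem:spikex}.'' Your extra paragraph justifying the relabeling of the missed non-tip element as $x_1$ is a reasonable clarification the paper leaves implicit, but the substance is identical.
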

\begin{proof}
    The theorem follows by combining Lemmas~\ref{lem:spiket} and~\ref{lem:spikex}.
\end{proof}

\begin{rem}
The orientation of colorings of wheels played a crucial role in the existence of strictly monotone exchange sequences. In particular, among any three basis pairs there exists two having the same orientation, and for those there exists a strictly monotone sequence of exchanges. Spikes are interesting because one can define an arbitrarily large number of colorings without a strictly monotone exchange sequence between any two of them.

To see this, consider the spike\footnote{This matroid is interesting on its own as it is isomorphic to the unique rank-$r$ binary spike, see \cite{oxley2011matroid} for details.} $M$ defined by \[\cC_3 \coloneqq \{C\subseteq S:\ |C|=r,\ |C\cap\{x_i,y_i\}|=1\ \text{for $1\leq i\leq r$},\  |C \cap \{x_1, \dots, x_r\}|\ \text{is odd}\}.\]
Consider all pairs $(R,B)$ of disjoint sets of size $r$ for which $y_1 \in R$, $t \in B$, $|R \cap \{x_i,y_i\}| = |B \cap \{x_i,y_i\}| = 1$ for all $2 \le i \le r$, and $|R \cap \{x_2, \dots, x_r\}|$ is even. There are $2^{r-2}$ such pairs. Note that all such pairs $(R,B)$ are colorings of the matroid obtained from $M$ by deleting $x_1$. Further observe that any feasible exchange uses at least one of $y_1$ and $t$. Indeed, exchanging $x_i$ with $y_j$ for some $2 \le i,j \le r$ results in an odd number of red elements in $\{x_2, \dots, x_r\}$, meaning that the red elements form a circuit, showing that the exchange is non-feasible. Exchanging $x_i$ with $x_j$ or $y_i$ with $y_j$ for some $2 \le i,j \le r$, $i \ne j$ results in either $\{t,x_i,y_i\}$ or $\{t,x_j,y_j\}$ forming a blue circuit, hence the exchange is non-feasible. This implies that there exists no strictly monotone exchange sequence between any two distinct colorings of this form.
\end{rem}

\section{Conclusions}
\label{sec:further}

In this paper, we proposed a weighted generalization of Hamidoune's conjecture on the exchange distance of compatible basis pairs. We verified the conjecture for strongly base orderable matroids, and its strengthening in which the exchange sequence had small length and weight simultaneously for split matroids, graphic matroids of wheels, and spikes. For the latter three classes, our proofs also imply polynomial-time\footnote{In matroid algorithms, it is usually assumed that the matroids are given by independence oracles, and the complexity of the algorithm is measured by the number of oracle calls and other conventional elementary steps.} algorithms that determine the required exchange sequences. For strongly base orderable matroids, we get a similar result if, for any pair of bases, a bijection $\phi$ between them ensured by strongly base orderability can be computed in polynomial time.  

Motivated by Lemma~\ref{lem:more}, it would be tempting to formulate a conjecture stating that there always exists an exchange sequence that has small weight with respect to two weight functions $w_1$ and $w_2$ simultaneously. However, this is not true in general. Let $G=(V,E)$ be a complete graph on four vertices with edge set $E=\{a,b,c,d,e,f\}$, and let $M$ be the graphic matroid of $G$, see Figure~\ref{fig:k4}. Interestingly, $G$ is a wheel, but $M$ can be also obtained by deleting the tip of a spike. Consider two colorings $\bP_1=(R_1,B_1)$ and $\bP_2=(R_2,B_2)$ of $G$ where $R_1=\{a,b,c\}$, $B_1=\{d,e,f\}$, $R_2=\{b,d,f\}$ and $B_2=\{a,c,e\}$. It is not difficult to check that any sequence of exchanges that transforms $\bP_1$ into $\bP_2$ uses at least one of the edges $b$ and $e$ twice. This implies that if we set $w_1(b)\coloneqq 1$ and $0$ otherwise, and we set $w_2(e)\coloneqq 1$ and $0$ otherwise, then any exchange sequence has $w_1$-weight or $w_2$-weight at least $2=2w_1(E)=2w_2(E)$. 

\begin{figure}[t!]
\centering
\begin{subfigure}[t]{0.47\textwidth}
\centering
\includegraphics[width=.4\linewidth]{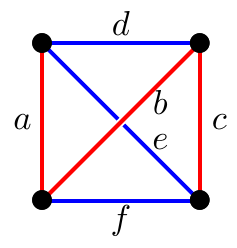}
\caption{Coloring $\bP_1=(R_1,B_1)$ of $K_4$.}
\label{fig:k4a}
\end{subfigure}\hfill
\begin{subfigure}[t]{0.47\textwidth}
\centering
\includegraphics[width=.4\linewidth]{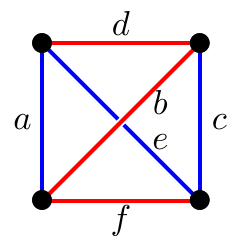}
\caption{Coloring $\bP_2=(R_2,B_2)$ of $K_4$.}
\label{fig:k4b}
\end{subfigure}
\caption{Example showing that the statement of Lemma~\ref{lem:more} is not true in general. Any exchange sequence that transforms $(R_1,B_1)$ into $(R_2,B_2)$ uses one of $b$ and $e$ twice. If $w_1(b)=1$ and $0$ otherwise, and $w_2(e)=1$ and $0$ otherwise, then any sequence violates the weight constraint for at least one the weight functions.}
\label{fig:k4}
\end{figure}

On the other hand, no counterexample is known for the case when $w_2\equiv 1$, that is, when one would like to find an exchange sequence that has small length and weight simultaneously. Even more, both Theorems~\ref{thm:main1} and~\ref{thm:main2} ensured the existence of an exchange sequence that uses each element at most twice. A rather optimistic conjecture would state the existence of a sequence of exchanges that, besides having small length and weight, also has this property. 

\paragraph{Acknowledgement.}

The authors are grateful to Attila Bernáth, Zoltán Király, Yusuke Ko\-ba\-yashi, and Eszter Szabó for helpful discussions. 

Tamás Schwarcz was supported by the \'{U}NKP-22-3 New National Excellence Program of the Ministry for Culture and Innovation from the source of the National Research, Development and Innovation Fund. The work was supported by the Lend\"ulet Programme of the Hungarian Academy of Sciences -- grant number LP2021-1/2021 and by the Hungarian National Research, Development and Innovation Office -- NKFIH, grant numbers FK128673 and TKP2020-NKA-06.


\bibliographystyle{abbrv}
\bibliography{weighted}

\end{document}